\newtheorem{thm}{Theorem}[section]
\newtheorem{lem}[thm]{Lemma}
\newtheorem*{thm*}{Theorem}
\theoremstyle{definition}
\theoremstyle{remark}
\newtheorem{rem}[thm]{Remark}
\newtheorem*{claim}{Claim}
\newtheorem*{ack*}{Acknowledgment}
\numberwithin{equation}{section}
\numberwithin{figure}{section}
\newcommand*{\Perm}[2]{{}_{#1}\!P_{#2}}%
\newcommand{\cA}{\mathcal{A}}       % alphabet
\newcommand{\cC}{\mathcal{C}}
\newcommand{\cD}{\mathcal{D}}
\newcommand{\cI}{\mathcal{I}}
\newcommand{\cN}{\mathcal{N}}
\newcommand{\cP}{\mathcal{P}}
\newcommand{\cQ}{\mathcal{Q}}
\newcommand{\cU}{\mathcal{U}}
\newcommand{\cV}{\mathcal{V}}       % vertex sets
\newcommand{\setC}{\mathbb{C}}
\newcommand{\setN}{\mathbb{N}}
\newcommand{\setZ}{\mathbb{Z}}
\newcommand{\B}{\mathcal{B}}    %block
\newcommand{\onto}{\xymatrix{\ar@{>>}[r]&}}
\newcommand{\da}[4]{\xymatrix{#1 \ar@<.5ex>[r]^{#2} \ar@<-.5ex>[r]_{#3} & #4}}
\newcommand{\floor}[1]{\lfloor#1\rfloor}
\begin{document}
%\nocite{*}

\title[Subshifts with positive entropy dimension]{Constructions of subshifts with positive topological entropy dimension}

\author[U. Jung]{Uijin Jung}
\address{Department of Mathematics \\
	Ajou University \\
	206 Worldcup-ro \\
    Suwon 16499 \\
	South Korea}
\email{uijin@ajou.ac.kr}

\author[J. Lee]{Jungseob Lee}
\email{jslee@ajou.ac.kr}

\author[K. Park]{Kyewon Koh Park}
\address{School of Mathematics \\
	Korea Institute for Advanced Study \\
	85 Hoegiro, Dongdaemun-gu \\
    Seoul 02455 \\
	South Korea}
\email{kkpark@kias.re.kr}

\subjclass[2010]{Primary 37B99; Secondary 37A25, 54H20}
\keywords{zero entropy, entropy dimension, strictly ergodic, Shannon-McMillan-Brieman, return time}
\date{\today}

\maketitle

\begin{abstract}
    %Entropy dimension is introduced to measure the complexity of entropy zero systems.  It measures the subexponential itinery growth rate of orbits.  We construct  an example of positve entropy dimension $\alpha$ for each 0<$\alpha$<1.  They are weakly mixing together with the property of strict ergodicity.  We investgate the Equipartition Property of the examples and show that for a systems of entropy dimension $=alpha$, it satisfies $limsup{1 over n^{\beta}}{(-log \mu(P_n(x)))}$ is equal to 0 if
    %And we also show that the limits do not exist in general. Hence they do not have the Shannon-McMillan-Breiman Theorem. We also investigate the Return Time Property for these examples and show that similar properties hold.  Our examples show that the definition of entropy dimension and slow entropy of using $\epsilon$-balls in the hamming distance are not the same.

    The notion of entropy dimension has been introduced to measure the subexponential complexity of zero entropy systems. In this work we present a general construction of a strictly ergodic subshift of topological entropy dimension $\alpha$ for each $\alpha \in (0,1)$. It is shown that the system satisfies some sort of regularity in the size of atoms and the first return time. Moreover, we modify the construction to obtain a variant system that is weakly mixing.
    %The notion of entropy dimension has been introduced to measure the subexponential complexity of zero entropy systems. In this work we present a general construction of a subshift of positive topological entropy dimension $\alpha$ for each $\alpha \in (0,1)$. We investigate the size of atoms of the system and show that they do not have the equipartition property, however, the size of atoms as well as the return times of the system exhibits some regularity. The systems are strictly ergodic and metrically rigid, and can be modified to be weakly mixing.
\end{abstract}

\section{Introduction}\label{sec:introduction}

    In 1958, Kolmogorov introduced the notion of entropy to dynamical systems generalizing the Shannon entropy in information theory \cite{Sha48}. Entropy measures the chaoticity of a dynamical system. In the case of a $\mathbb{Z}$-action, past is well defined and hence we say entropy measures the (un)predictability of a system knowing the complete past. If it has entropy zero, then we say its past determines the future.  Entropy is an isomorphism invariant and is a complete invariant in the class of Bernoulli actions \cite{Orn70}. Systems of positive entropy have been studied for several decades and many of their properties are well understood at least in the case of $\mathbb{Z}$-actions along with their applications. The entropy theory was extended to general amenable group actions\cite{OrnW83} and recently also to nonamenable group actions \cite{Bow10a, Bow10b}.

    In the study of general group actions, entropy zero systems arise rather naturally. If a general group action has a noncocompact subgroup action of finite entropy, then it is easy to see that its entropy is zero. However their subgroup actions exhibit interesting dynamics with diverse properties. %Given a general group action of zero entropy, like a $\setZ^2$-action, all its subgroup actions may have entropy zero, positive finite entropy or infinite entropy.
    To investigate the properties of entropy zero actions, directional (subgroup) properties have been explored in many different directions \cite{BoyL97, JohS15}. %For example, a zero entropy $\setZ^2$-action may have the following property: (i) all directions have zero entropy, (ii) all directions have positive entropy, or (iii) all of its rational directions have infinite entropy \cite{Par99, Sin85}.
    In general, a zero entropy $\setZ^2$-action may have mixture of positive, infinite and zero directional entropies. We note that if one of the directions has positive entropy, then it has exponential growth rate of orbits.
    Clearly different complexities will give rise to different behaviors of their subgroup actions, and the complexities of general group actions together with their subgroup actions should be explored to understand the dynamics of bigger group actions of entropy zero \cite{RobS11}.

    Entropy zero systems make up a dense $G_{\delta}$ subset of all dynamical systems. However much less is known about the properties of entropy zero dynamical systems. Systems of very small complexity, like group rotations, interval exchange maps, substitution systems and Chacon transformations have been studied and they are known to have polynomial growth rate of orbits.  Some of their properties have been investigated by many authors \cite{Fer97, Goo74, KamZ02}. Recently more examples like Pascal adic transformations \cite{MelP05} and nilpotent group actions \cite{KraHM14} were investigated and shown to have polynomial growth rate.

    Motivated by the study of entropy zero systems of bigger group actions, we want to investigate the properties of entropy zero $\setZ$-actions whose orbit growth rate is subexponential. Only a few systems of entropy zero are known to have intermediate growth rate which is strictly greater than polynomial, but less than exponential  \cite{Cas03}. %To analyze the complexity of these entropy zero systems, we exploit the properties of topological entropy dimension \cite{DouHP13} which was first introduced in \cite{Car97}.
    To analyze the complexity of these entropy zero systems, metric and topological entropy dimension were first introduced in \cite{Car97} and their properties are investigated in \cite{DouHP13} and \cite{Car97}.
    Positive entropy systems have exponential growth rate, hence they have entropy dimension 1, while those of polynomial orbit growth rate have entropy dimension 0. In this article we present a constructive method to build a strictly ergodic subshift of topological entropy dimension $\alpha$ for any $\alpha \in (0,1)$. Roughly speaking, the number of $n$-blocks in the subshift is in the order of $\exp( n^\alpha )$.

    For ergodic systems of positive entropy we have the Shannon-McMillan-Breiman theorem which is called the equipartition property \cite{OrnW83, Shi}.  The Ornstein-Weiss theorem on the return time also holds for those systems \cite{OrnW93}. However for zero entropy systems little is known about these properties except for irrational rotations \cite{KimP08}. Through our example, we investigate the equipartition and return time properties for the systems of subexponential growth rate. We show that it does not have the equipartition property, that is, the size of an atom of $\bigvee_{i=0}^{n-1} T^{-i} \cP$, where $\cP$ is a generator for the system, is not necessarily in the order of $\exp( -{n^\alpha} )$.
    It is not known yet whether there are systems of zero entropy with subexponential growth rate and with corresponding equipartition property.

    %Unlike positive entropy case, smooth realization of entropy zero systems are little known except that any measure preserving diffeomorphism on a circle is conjugate to a rotation. It is an open question whether there exists a smooth system of given entropy dimension $\alpha \in (0,1)$.

    The outline of the article is as follows. Section 2 presents the definitions of the entropy dimension and necessary terminology. In Section 3 we construct strictly ergodic systems with given entropy dimensions. In Section 4 we see that our examples do not have the equipartition property. However we will show that they exhibit some `regularity' in their sizes of atoms and return times. %are able to show that the size of the atoms and the return time have some kind of `regularity'.
    In Section 5 we discuss some other aspects of the systems like rigidity and weakly mixing property.

    %It is not hard to see that all of our system have metric entropy dimension 0. We will discuss a new class of examples together with their metric properties in [JLP].

%\newpage
\vspace{0.4cm}
\section{Background}\label{sec:background}
    We introduce some terminology and known results. For more details on topological entropy dimension, see \cite{DouHP13}.

    Let $(X,T)$ be a topological dynamical system. If $\cU$ and $\cV$ are open covers of $X$, let $\cU \vee \cV = \{ U \cap V : U \in \cU, V \in \cV \}$. Also denote by $\cN(\cU)$ the cardinality of a smallest subcover of $\cU$.

%and $\cU$ be a finite open cover of $X$. Denote by $\cN(\cU)$ the cardinality of a smallest subcover of $\cU$. For each $\alpha > 0$, define
    Given an open cover $\cU$ of $X$ and $\alpha > 0$, define
    \[ \overline D(T, \alpha, \cU) = \limsup_{n \to \infty} \frac{\log \cN( \bigvee_{i=0}^{n-1} T^{-i} \cU )}{n^\alpha}. \]
    Then the function $\overline D(T, \alpha, \cU)$ on the parameter $\alpha$ has a unique critical value, say $\overline D(T,\cU)$, in $[0,\infty]$, i.e., $\overline D(T, \alpha, \cU) = 0 \text{ for } \alpha > \overline D(T,\cU)$ and $\overline D(T, \alpha, \cU) = \infty \text{ for } \alpha < \overline D(T,\cU)$. %The critical value of it is called the \emph{the topological upper entropy dimension of $\cU$}.
    The \emph{(topological) upper entropy dimension} $\overline D(X, T)$ of $(X,T)$ is defined by the supremum of $\overline D(T, \cU)$ for all finite open covers $\cU$ of $X$. It is known that $\overline D(X, T) = \overline D(T, \cU)$ if $\cU$ is a generating open cover. Similarly, the \emph{(topological) lower entropy dimension $\underline D(X,T)$} of $(X,T)$ is defined by using $\liminf$ instead of $\limsup$. If $\overline D(X,T) = \underline D(X,T)$, we denote it by $D(X,T)$ and call it the \emph{(topological) entropy dimension of $(X,T)$}. They are invariant under topological conjugacy.

    The notion of entropy dimension is simplified when a dynamical system is a subshift. A \emph{subshift} (or \emph{shift space}) is a closed $\sigma$-invariant subset of a full shift over some finite set $\cA$ of symbols. For a subshift $X$, denote by $\B_n(X)$ the set of all words of length $n$ appearing in the points of $X$ and $\B(X) = \bigcup_{n \geq 0} \B_n(X)$. %For a finite set $\cA$ and $l \in \setN$, denote by $\cA^l$ the set of all words of length $l$ over $\cA$ and let $\cA^* = \bigcup_{l \geq 0} \cA^l$.
    For a subshift $X$, since there is a natural $0$-th coordinate clopen partition, the topological upper entropy dimension is a unique critical value of the function $\overline D(T, \alpha)$, where
    \[ \overline D(T, \alpha) = \limsup_{n \to \infty} \frac{\log |\B_n(X)|}{n^\alpha}, \]
    and the lower entropy dimension is defined analogously.

    %For more details on topological entropy dimension, see \cite{DouHP13}.

\vspace{0.4cm}

\section{A construction of a subshift with given entropy dimension}\label{sec:construction}

In this section, we present a general construction of a topological dynamical system with positive topological entropy dimension.
The constructed system is a subshift over the alphabet $\{0, 1\}$. We first describe a construction of a subshift with entropy dimension $1/2$ to make the argument readable. A system of topological entropy dimension $\alpha$, for any $0 < \alpha < 1$, can be constructed similarly. In what follows, to simplify the notation we often omit the floor function notation on the square roots and write $\sqrt{N}$ instead of $\floor{\sqrt{N}}$. The omissions will be clear from the context.

It is well known that for all sufficiently large $n$, we have
\begin{equation}\label{stirling}
n \log n - n < \log{ n! } <  n \log n.
\end{equation}
Denote by $\Perm{n}{k}$ the number of $k$-permutations of $n$. We note that
\begin{equation}\label{perm}
k \log n - k < \log{ \Perm{n}{k} } \leq \log n^k = k \log n,
\end{equation}
for all sufficiently large $n$ and any $k$, $1 \leq k \leq n$.
We write $a(n) \sim b(n)$ if the ratio $a(n)/b(n)$ goes to $1$ as $n \to \infty$. From (\ref{stirling}) and (\ref{perm}), we respectively obtain $\log n! \sim n \log n$ and $\log \Perm{n}{k} \sim k \log n$ as $n \to \infty$.

Fix a large square number $l_1 \in \setN$. Let $\cC_1$ be a set of binary words of length $l_1$ with cardinality $N_1 = |\cC_1| = 2^{\sqrt{l_1}}$. By taking $l_1$ large enough, we may assume that (\ref{stirling}) and (\ref{perm}) hold for all $n \ge N_1$.

For the induction step, suppose that a set $\cC_j$ of words of length $l_j$ has been constructed with cardinality $N_j = |\cC_j|$.
Give an ordering on $\cC_j$ and write $\cC_j = \{ u_i^{(j)} : 1 \leq i \leq N_j \}$.
Consider a new word $u_1^{(j+1)}$ formed by concatenating all the words in $\cC_j$ in order:
\[ u_1^{(j+1)} = u_1^{(j)} u_2^{(j)} u_3^{(j)} u_{4}^{(j)} u_5^{(j)} \cdots u_8^{(j)} u_{9}^{(j)} u_{10}^{(j)} \cdots u_{N_j}^{(j)}.  \]
Put $P_j = \{2\} \cup \{i^2: 2 \le i \le \sqrt{N_j}\}$ (We remark that the insertion of $2$ in $P_j$ is only for  notational convenience). The collection $\cC_{j+1}$ consists of those words of length $l_{j+1} = l_j \cdot N_j$ obtained by permuting the subwords $u_{i}^{(j)}$ of $u_1^{(j+1)}$ for $i \in P_j$ and leaving the others fixed.
That is, a typical element in $\cC_{j+1}$ is of the form
\[ u_1^{(j)} u_{\pi(2)}^{(j)} u_3^{(j)} u_{\pi(4)}^{(j)} u_5^{(j)} \cdots u_8^{(j)} u_{\pi(9)}^{(j)} u_{10}^{(j)} \cdots u_{N_j}^{(j)},  \]
where $\pi$ is a permutation on the set $P_j$.
We call a word $u_i^{(j)}$ in $\cC_j$ \emph{permuted} (or \emph{unstable}) if $i \in P_j$ and \emph{unpermuted} (or \emph{stable}) otherwise.

Note that the cardinality $N_j$ of $\cC_{j}$ and  the length $l_j$ of the words in $\cC_j$ satisfy the iterative formulas:
\begin{equation}\label{iter}
N_{j+1} = ( \sqrt{N_{j}})! \quad \mbox{and} \quad l_{j+1} = l_{j} \cdot N_{j} \quad \mbox{for } j \ge 1.
\end{equation}

Since $u_1^{(j)}$ is a prefix of $u_1^{(j+1)}$ for each $j$, there is a unique limit point $w \in \{ 0, 1 \}^\setN$ of the sequence $\{ u_1^{(j)} \}_{j \in \setN}$. Let $X^+$ be the orbit closure of $w$ and $X$ the inverse limit of $X^+$. Equivalently, we may let $X$ be the set of all bi-infinite sequences over $\{0, 1\}$ each word of which is a subword of a word in $\cC_j$ for some $j \in \setN$.

Since each word in $\cC_j$ occurs in every word in $\cC_{j+1}$, every word in $X$ occurs syndetically in $X$, so it follows that $X$ is minimal. Also any word $u \in \cC_j$, $j \in \setN$, occurs exactly once in $\cC_{j+1}$ (with respect to $\sigma^{l_j}$), hence an irreducible component of $\sigma^{l_j}(X)$ is uniquely ergodic. As in Lemma \cite[Lemma 1.9]{Gri72a}, we see that $X$ is uniquely ergodic. It follows that $X$ is strictly ergodic.

\begin{lem}\label{basic} Let $N_j$ and $l_j$ be as in the above. Then
\begin{equation*}
\lim_{j \to \infty} \frac{\log {N_{j}}}{\sqrt{l_j}} = 0 \quad \mbox{and} \quad \lim_{j \to \infty} \frac{\log {N_{j}}}{l_j^\beta} = \infty \mbox{ for any } \beta<1/2.
\end{equation*}
\end{lem}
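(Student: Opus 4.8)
The plan is to reduce everything to the single sequence $a_j := \log N_j$ and to track it through the two iterative formulas in \eqref{iter}. Since $\log l_{j+1} = \log l_j + \log N_j$, one immediately obtains $\log l_j = \log l_1 + \sum_{i=1}^{j-1} a_i$, so both assertions become statements about $a_j$ compared with the partial sums $\sum_{i<j} a_i$. The engine driving the proof is a recursion for $a_j$ coming from $N_{j+1} = (\sqrt{N_j})!$ together with the Stirling estimate $\log n! \sim n \log n$ from \eqref{stirling}. Writing $n = \sqrt{N_j} \to \infty$ and using $\log \sqrt{N_j} = \tfrac12 \log N_j = \tfrac12 a_j$, I would first establish
\begin{equation*}
a_{j+1} = \tfrac12 \sqrt{N_j}\, a_j \,(1 + o(1)), \qquad\text{hence}\qquad \log a_{j+1} = \tfrac12 a_j + \log a_j - \log 2 + o(1).
\end{equation*}
This step is routine once one observes that $N_j \to \infty$, which is clear from the construction ($N_1$ is large and each $N_{j+1}$ is a factorial).

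For the first limit it is equivalent to show $b_j := \log a_j - \tfrac12 \log l_j \to -\infty$, since $\log N_j / \sqrt{l_j} = e^{b_j}$. Subtracting consecutive terms, using $\log l_{j+1} - \log l_j = a_j$ and the recursion above, I expect the leading $\tfrac12 a_j$ contributions to cancel exactly, leaving
\begin{equation*}
b_{j+1} - b_j = \big(\log a_{j+1} - \log a_j\big) - \tfrac12 a_j = -\log 2 + o(1).
\end{equation*}
As the increments converge to $-\log 2 < 0$, the sequence $b_j$ diverges to $-\infty$, giving $\log N_j / \sqrt{l_j} \to 0$. I expect this cancellation to be the main obstacle: the two competing quantities $\log a_j$ and $\tfrac12 \log l_j$ agree to leading order, so the conclusion hinges on the exact factor $\tfrac12$ produced by the square root in $N_{j+1} = (\sqrt{N_j})!$, which is precisely what yields a net decrement of $\log 2$ at each step. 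A cruder estimate that loses this constant would only prove $b_j$ bounded, not divergent.

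For the second limit, with $\beta < 1/2$, the margin is favorable and the estimate is far more robust. It suffices to show $\log a_j - \beta \log l_j \to +\infty$. Using $\log a_j \geq \tfrac12 a_{j-1}$ for large $j$ (valid since $\log a_{j-1} - \log 2 + o(1) \to +\infty$) and splitting off the top term of $\sum_{i=1}^{j-1} a_i$, I would bound
\begin{equation*}
\log a_j - \beta \log l_j \geq \big(\tfrac12 - \beta\big) a_{j-1} - \beta \sum_{i=1}^{j-2} a_i - \beta \log l_1.
\end{equation*}
Because $a_{i+1}/a_i = \tfrac12 \sqrt{N_i}(1+o(1)) \to \infty$, the sequence $(a_i)$ grows faster than geometrically, so the tail sum satisfies $\sum_{i=1}^{j-2} a_i = o(a_{j-1})$ and is negligible against the positive term $(\tfrac12 - \beta) a_{j-1}$. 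Since $a_{j-1} \to \infty$, the right-hand side tends to $+\infty$, which yields $\log N_j / l_j^\beta \to \infty$ and completes the proof.
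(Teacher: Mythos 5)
Your proof is correct. For the first limit your argument is, in substance, the paper's own: the paper proves by induction the explicit bound $\log N_j \le \frac{\log 2}{2^{j-1}}\sqrt{l_j}$, whose logarithm is exactly the statement that your quantity $b_j = \log\log N_j - \tfrac12\log l_j$ decreases by at least $\log 2$ per step; your telescoping of $b_{j+1}-b_j$ is the induction step written additively, and you correctly identify that the whole point is the exact factor $\tfrac12$ coming from $\sqrt{N_j}$ in Stirling's formula. For the second limit you genuinely diverge from the paper: the paper carries an inductive lower bound $\log N_j \ge \frac{\log 2}{2^{j-1}}\sqrt{l_j} - \sum_{i<j}\frac{2^{i-j+1}\sqrt{l_j}}{\sqrt{l_i}}$ and then needs both that the correction terms stay below the main term (which uses $l_1$ large) and that $l_j^{1/2-\beta}/2^{j-1}\to\infty$, neither of which it spells out. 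Your route instead compares $\log\log N_j \ge \tfrac12 \log N_{j-1}$ directly against $\beta\log l_j = \beta\bigl(\log l_1 + \sum_{i<j}\log N_i\bigr)$ and uses the super-geometric growth of $\log N_i$ to absorb the partial sum into the top term; this is cleaner, avoids any hypothesis that $l_1$ be large beyond what is needed for Stirling, and makes visible that the margin $\tfrac12-\beta>0$ is all that is used. The only caveat is the floor-function issue for $\sqrt{N_j}$, which you, like the paper, may legitimately wave away since the Stirling asymptotics are unaffected.
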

\begin{proof} It suffices to prove the lemma only when each $N_j$ is a square number. We use the mathematical induction together with (\ref{stirling}) and (\ref{iter}) to see that
$$\log N_j \le \frac{\log 2}{2^{j-1}} \sqrt{l_j}.$$
The first equality of the lemma is immediate from this.
Once again we apply the mathematical induction and use (\ref{stirling}) and (\ref{iter}) to obtain
$$\log N_j \ge  \frac{\log 2}{2^{j-1}}\sqrt{l_j} - \frac{1}{2^{j-2}}\frac{\sqrt{l_j}}{\sqrt{l_1}} - \frac{1}{2^{j-3}}\frac{\sqrt{l_j}}{\sqrt{l_2}} - \cdots - \frac{\sqrt{l_j}}{\sqrt{l_{j-1}}}.$$
This yields the second equality of the lemma.
\end{proof}

We now show that the constructed system $X$ has topological entropy dimension $1/2$. In the remaining part of this section we assume that each $N_j$ is a square number to simplify the argument. Let $l \in \setN$. There is $j \in \setN$ such that $l_j \leq l < l_{j+1} = l_j \cdot N_j$.
Without loss of generality, we may assume that $l = k \cdot l_j$ for $1 \leq k < N_j$. Then by considering the number of the length $l$ prefixes of the words in $\cC_j$ and using (\ref{perm}), we have
    \begin{equation}\label{calculation_ED_geq}
        \begin{split}
            \frac{1}{\sqrt{l}} \log |\B_l(X)| & \geq \frac{1}{\sqrt{k l_j}} \log ( \Perm{\sqrt{N_j}}{\floor{\sqrt{k}}} ) \\
            &\ge \frac{1}{\sqrt{k l_j}} ( \floor{\sqrt{k}} \log \sqrt{N_j} - \floor{\sqrt{k}}) \\
            &\ge \frac{1}{4 \sqrt{l_j}} \log {N_j} - \frac{1}{\sqrt{l_j}}.
                    \end{split}
    \end{equation}
On the other hand, for each $u \in \B_l(X)$, $u$ is a subword of a word $B_1 \cdots B_{k+1}$ formed by concatenating $k+1$ words in $\cC_j$, and the maximal number of permuted positions in the $j$-th level is $\floor{\sqrt{k+1}}$. Hence we have
    \begin{equation}\label{calculation_ED_leq}
        \begin{split}
            \frac{1}{\sqrt{l}} \log |\B_l(X)| &\leq \frac{1}{\sqrt{k l_j}} \log ( l_{j+1} \cdot \Perm{\sqrt{N_j}}{\floor{\sqrt{k+1}}} ) \\
            &\leq \frac{1}{\sqrt{k l_j}} \log l_{j+1} + \frac{1}{\sqrt{k l_j}} \log \sqrt{N_j}^{\floor{\sqrt{k+1}}} \\
            &\leq \frac{1}{\sqrt{k l_j}} \log l_j + \frac{1}{2\sqrt{l_j}} \left( \frac{1}{\sqrt{k}} + \frac{\sqrt{k+1}}{\sqrt{k}} \right) \log N_j \\
            &\leq \frac{1}{\sqrt{k l_j}} \log l_j + \frac{3}{2 \sqrt{l_j}} \log N_j.
        \end{split}
    \end{equation}
From (\ref{calculation_ED_geq}), (\ref{calculation_ED_leq}) and the first equality of Lemma \ref{basic} it follows that
$$\lim_{l \to \infty} \frac{1}{\sqrt{l}} \log |B_l(X)| = 0.$$
A similar calculation together with the second equality of  Lemma \ref{basic} yields that
$$  \lim_{l \to \infty} \frac{1}{l^\beta} \log |B_l(X)| = \infty$$
for every $\beta < 1/2$.
This proves that the system $X$ has topological entropy dimension $1/2$.

In order to construct a system with topological entropy dimension $\alpha$ for given $\alpha \in (0,1)$, we let $\cC_{j+1}$ consist of the words obtained by permuting $\cC_j$-words of $u_1^{(j+1)}$ at positions $i^{\floor{1/\alpha}}$ for $1 < i \leq (N_j)^\alpha$. If $l_1$ and $N_1$ are large enough, then the proof goes as in the case $\alpha = 1/2$.

\begin{thm}
    Let $\alpha \in (0,1)$. There is a strictly ergodic subshift with entropy dimension $\alpha$.
\end{thm}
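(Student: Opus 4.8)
The plan is to run the construction of this section with the general-$\alpha$ modification and then retrace, \emph{mutatis mutandis}, the three ingredients established for $\alpha = 1/2$: strict ergodicity, the growth estimate of Lemma \ref{basic}, and the two-sided bound on $\frac{1}{l^\beta}\log|\B_l(X)|$. I would build $\cC_{j+1}$ by permuting the $\cC_j$-words of $u_1^{(j+1)}$ sitting at the positions $\floor{i^{1/\alpha}}$ for $1 < i \le (N_j)^\alpha$, so that the number of unstable positions is $\sim (N_j)^\alpha$ and the recursions read $N_{j+1} = ((N_j)^\alpha)!$ and $l_{j+1} = l_j N_j$ (for $\alpha = 1/2$ this spacing is exactly the $i^2$ of the text). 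The reason for spacing the unstable positions like $i^{1/\alpha}$ rather than consecutively is that any window of $k+1$ consecutive $\cC_j$-blocks then meets at most $\sim (k+1)^\alpha$ of them; this is the analogue of the bound $\floor{\sqrt{k+1}}$ used in \eqref{calculation_ED_leq}, and it is what forces the dimension down to $\alpha$.

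First I would prove the analogue of Lemma \ref{basic}: $\log N_j / l_j^\alpha \to 0$ and $\log N_j / l_j^\beta \to \infty$ for every $\beta < \alpha$. Setting $a_j = \log N_j / l_j^\alpha$ and using $\log N_{j+1} \sim \alpha (N_j)^\alpha \log N_j$ (Stirling) together with $l_{j+1}^\alpha = (N_j)^\alpha l_j^\alpha$ yields $a_{j+1} \sim \alpha\, a_j$, so $a_j$ decays geometrically and the first limit follows; the second comes from the same telescoping lower estimate on $\log N_j$ as in the proof of Lemma \ref{basic}, with every $\sqrt{\cdot}$ replaced by $(\cdot)^\alpha$. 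Strict ergodicity needs no new idea: each word of $\cC_j$ still occurs syndetically in every word of $\cC_{j+1}$, and exactly once with respect to $\sigma^{l_j}$, so minimality and unique ergodicity follow verbatim through \cite[Lemma 1.9]{Gri72a}.

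The core is the entropy-dimension estimate, carried out along the lines of \eqref{calculation_ED_geq}--\eqref{calculation_ED_leq}. For $l_j \le l = k\, l_j < l_{j+1}$, the lower bound counts the length-$l$ prefixes of the $\cC_j$-words, which realize $\Perm{(N_j)^\alpha}{\floor{k^\alpha}}$ distinct patterns on the $\sim k^\alpha$ unstable positions inside the first $k$ blocks, giving $\frac{1}{l^\alpha}\log|\B_l(X)| \ge c\, a_j - o(1)$ for some $c = c(\alpha) > 0$. For the upper bound, every length-$l$ word lies inside a concatenation of $k+1$ consecutive $\cC_j$-blocks, of which at most $\sim (k+1)^\alpha$ are unstable, so $|\B_l(X)| \le l_{j+1} \cdot \Perm{(N_j)^\alpha}{\floor{(k+1)^\alpha}}$ and hence $\frac{1}{l^\alpha}\log|\B_l(X)| \le \frac{\log l_j}{l_j^\alpha} + C\, a_j$. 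Since $a_j \to 0$ and $\log l_j = o(l_j^\alpha)$, both bounds force $\lim_l \frac{1}{l^\alpha}\log|\B_l(X)| = 0$, while the lower bound renormalized by $l^\beta$ gives $\frac{1}{l^\beta}\log|\B_l(X)| \ge c\, \frac{\log N_j}{l_j^\beta} \to \infty$ for $\beta < \alpha$; as these are genuine limits, $\overline D(X,T) = \underline D(X,T) = \alpha$.

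I expect the main obstacle to be making the upper bound uniform over the placement of the window rather than only for prefixes: one must verify that among any $k+1$ consecutive positions at most $O(k^\alpha)$ are unstable. This rests on the subadditivity $(p+k)^\alpha - p^\alpha \le k^\alpha$ of $x \mapsto x^\alpha$, which is exactly why the unstable positions are spaced like $i^{1/\alpha}$ and where the exponent $1/\alpha$ is essential. The remaining difficulty is purely bookkeeping---the floors in $(N_j)^\alpha$, $k^\alpha$, and $\floor{i^{1/\alpha}}$---and, as in the case $\alpha = 1/2$, choosing $l_1$ and $N_1$ large enough absorbs these into the $\sim$ and $o(1)$ terms without affecting the limits.
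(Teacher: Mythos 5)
Your proposal is correct and follows essentially the same route as the paper: the text proves the case $\alpha=1/2$ in full and then disposes of general $\alpha$ by the one-line remark that one permutes the $\cC_j$-words of $u_1^{(j+1)}$ at positions indexed by $i$ with $1<i\le (N_j)^\alpha$ and reruns the estimates, which is exactly what you carry out. One point worth recording: your placement $\floor{i^{1/\alpha}}$ is the correct one, and your justification via the subadditivity $(p+k)^\alpha-p^\alpha\le k^\alpha$ is precisely the property needed to bound the number of unstable blocks in any window of $k+1$ consecutive $\cC_j$-blocks by $O(k^\alpha)$ in the analogue of \eqref{calculation_ED_leq}; the formula $i^{\floor{1/\alpha}}$ as printed in the text would, for $\alpha>1/2$, place the unstable positions consecutively and push the upper entropy dimension up to $1$, so it must be read as $\floor{i^{1/\alpha}}$.
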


For each ordering on $\cC_j$, $j \in \setN$, we obtain a different strictly ergodic system. The subshifts obtained in this way are in general not topologically conjugate. However, they are all measure theoretically isomorphic.

\vspace{0.4cm}
\section{Size of atoms and return time}\label{sec:SMB}
%\section{A Range of Shannon-McMillan-Brieman property and the return time property}

In this section we consider a variant of the subshift constructed in Section \ref{sec:construction} as follows: Let $\cC_1$ be a set of binary words of length $l_1$. We assume that all members of $\cC_1$ start with $001$, and $00$ is not a subword of any element of $\cC_1$ but a prefix.
 %starting with $001$ and has no $00$ as a subword other than the first three symbols of it.
The rest of the construction is the same as the one in Section \ref{sec:construction}. The word $001$ serves as a marker, and the adjustment is adopted to control the return time of the words occuring in the constructed system $X$.

We will look into the size of atoms of the iterated partition and the first return property of the subshift $X$. It is well known that the Shannon-McMillan-Breiman theorem and the Ornstien-Weiss return time property hold for the systems of positive entropy. Also almost every irrational rotation map on a circle has the analogous properties \cite{KimP08}. Unfortunately, it is not the case for our system. However, it exhibits some `regularity' in the size of atoms and the first return time.
%We will look into the size of atoms of the iterated partition and the return time property of the new subshift $X$. We recall that a positive entropy system has Shannon-McMillan-Breiman theorem. Also almost every irrational rotation map on a circle has the similar property \cite{KimP08}. However, we show that our system of entropy dimension $\alpha$ does not have the corresponding property:
%$ \inf \{ \beta : \limsup - \frac{1}{n^\beta} \log {\mu(P_n(x))} \} = \inf \{ \beta : \liminf - \frac{1}{n^\beta} \log {\mu(P_n(x))} \} = \alpha$. But they exhibit some `regularity' in their size of atoms and return times.

Let $\mu$ be the unique $\sigma$-invariant measure on $X$. As usual, for $u \in \B(X)$, denote by $\mu(u)$ the measure of the cylinder $[u] = \{ x \in X : x_{[0,|u|)} = u \}$. Since $X$ is uniquely ergodic, $\mu(u) = \mu([u])$ is equal to the limit of the relative frequency of $u$ in $w_{[1,n]}$, where $w$ is the unique limit point obtained in the process of constructing $X$.

For each $x \in X$, denote by $P_n(x)$ the $n$-cylinder $[x_0 \cdots x_{n-1}] = \{ y \in X : y_i = x_i \text{ for } 0 \leq i < n \}$ of $x$. Also denote by $R_n(x)$ the first return time of $x$ to the $n$-cylinder containing $x$, i.e., $R_n(x) = \inf \{ k > 0 : x_{[k,k+n)} = x_{[0,n)} \}$. Since $X$ is minimal, we have $R_n(x) < \infty$ for each $x \in X$ and $n \in \setN$.

The first result states that the subexponential growth rate of the measure of $P_n(x)$ can be any number between $0$ and the topological entropy dimension 1/2 (Theorem \ref{thm:SMB}).
We begin with simple lemmas.

\begin{lem}\label{lem:occur_after_multiple_of_li}
    Let $x \in X$ and $j \in \setN$. If $x_{[0,l_j)}$ is a word in $\cC_j$, then each return time of $x$ to $P_{l_j}(x)$ is a multiple of $l_j$. %That is, if $x_{[m,m+l_j)} = x_{[0,l_j)}$, then $m$ is a multiple of $l_j$.
In particular, $R_{l_j}(x)$ is a multiple of $l_j$.
\end{lem}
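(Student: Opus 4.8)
The plan is to reduce everything to a single alignment statement describing how $\cC_j$-words can sit inside the longer words of the construction. Specifically, I would prove the following structural fact, call it $(F_j)$: for every $m>j$ and every $V\in\cC_m$, any occurrence of a word of $\cC_j$ as a subword of $V$ begins at a position divisible by $l_j$ (measured from the start of $V$). Granting $(F_j)$, the lemma is immediate. A return time $k>0$ of $x$ to $P_{l_j}(x)$ is exactly a $k$ with $x_{[k,k+l_j)}=x_{[0,l_j)}$. By the definition of $X$, the finite block $x_{[0,k+l_j)}$ is a subword of some $V\in\cC_m$ with $m>j$, occupying $V_{[s,s+k+l_j)}$; then $V_{[s,s+l_j)}=x_{[0,l_j)}\in\cC_j$ and $V_{[s+k,s+k+l_j)}=x_{[k,k+l_j)}\in\cC_j$, so $(F_j)$ forces $l_j\mid s$ and $l_j\mid(s+k)$, whence $l_j\mid k$. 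Taking $k=R_{l_j}(x)$ gives the last assertion.

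To prove $(F_j)$ I would induct on $j$, starting by recording the role of the marker. Since every word of $\cC_1$ begins with $001$ and contains $00$ only as a prefix, a short boundary check shows that in any concatenation of $\cC_1$-words the pattern $001$ occurs exactly at the starts of the constituent words; hence in any $V\in\cC_m$ the occurrences of $001$ are precisely the positions $0,l_1,2l_1,\dots$. This gives the base case $(F_1)$: a $\cC_1$-word occurrence forces a $001$ at its starting position, which must therefore be a multiple of $l_1$.

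For the step $(F_j)\Rightarrow(F_{j+1})$ I would exploit that among the $\cC_j$-blocks of any word of $\cC_{j+1}$, the fixed word $u_1^{(j)}$ occurs exactly once, as the initial block: position $1\notin P_j$ is stable, and since the permutation moves only indices of $P_j$ while the members of $\cC_j$ are distinct, no other block equals $u_1^{(j)}$. Now take $V\in\cC_m$ with $m>j+1$ and an occurrence of some $B\in\cC_{j+1}$ at position $a$. Its initial block shows that $u_1^{(j)}$ occurs at $a$, so by $(F_j)$ we have $a=q\,l_j$, and this occurrence is the $q$-th $\cC_j$-block in the refinement of $V$ into $\cC_j$-blocks. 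That block lies in the $\floor{q/N_j}$-th $\cC_{j+1}$-block of $V$, at internal $\cC_j$-index $q\bmod N_j$, and equals $u_1^{(j)}$; by the uniqueness just noted this forces $q\equiv 0\pmod{N_j}$, i.e. $l_{j+1}=N_j l_j$ divides $a$. This is $(F_{j+1})$.

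The main obstacle is exactly this inductive step: the marker $001$ only pins down the $\cC_1$-grid, and a priori a $\cC_j$-word could recur out of phase inside a longer word. What makes the induction go through is that each level carries its own marker, namely the stable (hence unique) initial block $u_1^{(j)}$, which upgrades $\cC_j$-alignment to $\cC_{j+1}$-alignment. The only routine care needed is (i) the boundary analysis ensuring $001$ creates no spurious occurrences across concatenations, and (ii) noting that the embedding $x_{[0,k+l_j)}\subseteq V$ may be taken with $m>j$ as large as desired, which is legitimate since every $\cC_m$-word occurs in every $\cC_{m+1}$-word.
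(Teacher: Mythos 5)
Your proof is correct and follows essentially the same route as the paper's: an induction on $j$ whose base case rests on the marker $001$ and whose inductive step rests on the fact that $u_1^{(j)}$ is the unique, stably placed initial $\cC_j$-block of every $\cC_{j+1}$-word. Your reformulation as an alignment statement $(F_j)$ about occurrences inside $\cC_m$-words is just a finite-word repackaging of the paper's direct induction on return times, and if anything it makes the inductive step more explicit.
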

\begin{proof}
    For $j = 1$, $x_{[0,l_1)}$ begins with the marker $001$. Since $00$ does not occur in a word of $\cC_1$ except at its prefix, the word $001$ cannot occur in the middle of $CC' \in \cC_1 \cC_1$. It follows that each return time of $x$ to $P_{l_1}(x)$ is a multiple of $l_1$.
%$R_{l_1}(x)$ is a multiple of $l_1$. By considering the first return times of $\sigma^{k \cdot l_1} (x)$, for $k \in \setN$,

    Suppose that the claim holds for $j-1$. If $x_{[0,l_j)}$ is in $\cC_j$, then it begins with $v = u^{(j-1)}_1 \in \cC_{j-1}$. Let $m$ be a return time of $x$ to $P_{l_j}(x)$. Then by the induction hypothesis, $m$ is a multiple of $l_{j-1}$. However, all the words in $\cC_{j-1}$ other than $v$ are different from $v$, hence the first coordinate of $v$ should occur in $x_{(1,\infty)}$ at positions which are multiples of $l_j$. It follows that $m$ also should be a multiple of $l_j$.
\end{proof}

\begin{lem}\label{lem:unique_decomp}
    For each $x \in X$ and $j \in \setN$, there is a unique decomposition of $x$ into $\cC_j$ words. %(and $b$'s).
\end{lem}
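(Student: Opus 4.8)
The plan is to prove, by induction on $j$, both the existence and the uniqueness of the decomposition, exploiting the fact that the first subword of every $\cC_j$-word is a distinguished marker that cannot occur anywhere else inside the word. Throughout, by a \emph{$\cC_j$-decomposition of $x$} I mean an offset $r \in \setZ/l_j\setZ$ such that $x_{[r+nl_j, r+(n+1)l_j)} \in \cC_j$ for every $n \in \setZ$; existence asserts that at least one such $r$ exists, and uniqueness that there is at most one. The reduction of this bi-infinite statement to finite windows uses the defining property that every subword of $x \in X$ is a subword of some word in $\cC_m$, and that each word of $\cC_m$ is a concatenation of $\cC_j$-words.

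For the base case $j=1$ I would work with the marker $001$. Let $S = \{ n \in \setZ : x_{[n,n+3)} = 001 \}$. Since $001$ is a word of $X$ and $X$ is minimal, $S$ is unbounded in both directions. The key point, already used in the proof of Lemma \ref{lem:occur_after_multiple_of_li}, is that because $00$ occurs in a $\cC_1$-word only as a prefix, the word $001$ cannot straddle the junction of two concatenated $\cC_1$-words; hence in any concatenation $B_1 \cdots B_k \in \cC_1^{\,k}$ the marker $001$ occurs \emph{exactly} at the starting positions of the blocks $B_i$. Applying this to a window of $x$ of length $2l_1$, which is a subword of some word in $\cC_m$, shows that consecutive elements of $S$ differ by exactly $l_1$. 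Therefore $S = r + l_1\setZ$ for a unique $r \bmod l_1$, and the blocks $x_{[r+nl_1, r+(n+1)l_1)}$, being the $\cC_1$-constituents of the containing $\cC_m$-word, all lie in $\cC_1$. This proves existence and uniqueness at level $1$.

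For the inductive step I would pass from the (unique) $\cC_{j-1}$-decomposition $x = \cdots v_{-1} v_0 v_1 \cdots$, $v_i \in \cC_{j-1}$, to the $\cC_j$-decomposition by reading off the blocks equal to $u_1^{(j-1)}$. The crucial structural observation is that $u_1^{(j-1)}$ occurs exactly once in each $\cC_j$-word, as its first $\cC_{j-1}$-block: since $1 \notin P_{j-1}$ the first position is never permuted, while every permuted position carries a word $u_{\pi(i)}^{(j-1)}$ with $\pi(i) \in P_{j-1}$, hence $\ne u_1^{(j-1)}$, and the words of $\cC_{j-1}$ are pairwise distinct. Consequently any $\cC_j$-decomposition refines the $\cC_{j-1}$-decomposition, by splitting each $\cC_j$-word into its $N_{j-1}$ constituents, and must place its block boundaries precisely at the indices $i$ with $v_i = u_1^{(j-1)}$. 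By uniqueness at level $j-1$ this set of indices is intrinsic to $x$, and the same window-embedding argument shows these indices form an arithmetic progression of step $N_{j-1}$, with each consecutive group of $N_{j-1}$ blocks forming a word of $\cC_j$. This gives existence and uniqueness at level $j$.

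The step I expect to be the main obstacle is verifying the two ``no spurious occurrence'' facts that drive the induction: at level $1$, that $001$ never straddles a $\cC_1$-junction, and at higher levels, that $u_1^{(j-1)}$ never reappears inside a $\cC_j$-word. Both follow from the construction — the marker convention on $\cC_1$, and the fact that the permutations fix position $1$ while the $\cC_{j-1}$-words are distinct — but they must be checked with care, including the boundary cases where a candidate occurrence of the marker overlaps a junction. Once these are in hand, the propagation of a unique alignment from level $j-1$ to level $j$ is routine.
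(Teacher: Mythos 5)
Your proof is correct and follows essentially the same route as the paper's (much terser) argument: existence via a compactness/window argument, and uniqueness by induction using the marker $001$ at level $1$ together with the fact that $u_1^{(j-1)}$ occurs as a $\cC_{j-1}$-block of a $\cC_j$-word only at the first position, since $1 \notin P_{j-1}$ and the $\cC_{j-1}$-words are distinct. You simply supply the details (the straddling check for $001$ and the refinement argument) that the paper leaves implicit.
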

\begin{proof}
	The existence follows from compactness argument and the uniqueness follows from the existence of the marker word $001$ and the fact that $u^{(j-1)}_1$, which is the first member of $\cC_{j-1}$, is the prefix of each word of $\cC_j$ for $j \in \setN$. %added in each step.
\end{proof}

\begin{lem}\label{lem:same_measure}
    Let $C_1, \cdots, C_p$ be unpermuted words in $\cC_j$. Suppose that $C_1 \cdots C_p$ be a subword of some element of $C_{j+1}$. Then we have $\mu(C_1) = \mu(C_1 \cdots C_p)$.
\end{lem}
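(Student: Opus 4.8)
The plan is to upgrade the trivial cylinder inclusion $[C_1\cdots C_p]\subseteq[C_1]$ into an \emph{equality} of the two cylinders after intersecting with $X$; since $\mu$ is a measure on $X$, this immediately yields $\mu(C_1\cdots C_p)=\mu(C_1)$. Thus it suffices to show that every $x\in X$ with $x_{[0,l_j)}=C_1$ in fact satisfies $x_{[0,pl_j)}=C_1\cdots C_p$; the reverse inclusion is automatic because $C_1$ is a prefix of $C_1\cdots C_p$.

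First I would record the two combinatorial facts that make the stable slots rigid. Writing $C_1=u^{(j)}_{i_1}$, the index $i_1$ is not in $P_j$ since $C_1$ is unpermuted. By induction on $j$ the words of $\cC_j$ are pairwise distinct, because a permutation of the unstable blocks can be read back off the resulting word; consequently $C_1$, as a binary string, occupies slot $i_1$ in every element of $\cC_{j+1}$ and occupies \emph{no other} slot: a stable slot $i\notin P_j$ always carries the fixed word $u^{(j)}_i$, while a permuted slot $i\in P_j$ carries some $u^{(j)}_{\pi(i)}$ with $\pi(i)\in P_j$, which is never equal to $C_1$. Because $C_1\cdots C_p$ is a subword of an element of $\cC_{j+1}$ and $C_1,\dots,C_p$ are all unpermuted, they occupy the consecutive stable slots $i_1,i_1+1,\dots,i_1+p-1$ (in particular $i_1+p-1\le N_j$), and being stable these $p$ slots carry $C_1,\dots,C_p$ in \emph{every} element of $\cC_{j+1}$.

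The decisive step is the alignment claim: every occurrence of the $\cC_j$-word $C_1$ inside a point of $X$ begins at a cut of that point's unique $\cC_j$-decomposition (Lemma \ref{lem:unique_decomp}). I would prove this by induction on $j$, exactly along the lines of Lemma \ref{lem:occur_after_multiple_of_li}. For $j=1$ the word $C_1$ begins with the marker $001$, which, since $00$ occurs only as a prefix of a $\cC_1$-word, can start only at a $\cC_1$-cut. For the inductive step, $C_1$ begins with $u^{(j-1)}_1\in\cC_{j-1}$; by the induction hypothesis its occurrence is $\cC_{j-1}$-aligned, and since $1\notin P_{j-1}$ the word $u^{(j-1)}_1$ occurs exactly once---as the prefix---in each element of $\cC_j$, so the $\cC_{j-1}$-cuts carrying $u^{(j-1)}_1$ are precisely the $\cC_j$-cuts; hence the occurrence of $C_1$ is $\cC_j$-aligned.

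Combining these, if $x\in X$ has $x_{[0,l_j)}=C_1$ then position $0$ is a $\cC_j$-cut of $x$, so this copy of $C_1$ is the slot-$i_1$ block of the $\cC_{j+1}$-word occurring at position $0$ in the $\cC_{j+1}$-decomposition of $x$; the bound $i_1+p-1\le N_j$ keeps slots $i_1,\dots,i_1+p-1$ inside that single parent word, and stability forces them to read $C_1,\dots,C_p$. Hence $x_{[l_j,pl_j)}=C_2\cdots C_p$, giving $[C_1]\cap X=[C_1\cdots C_p]\cap X$ and therefore $\mu(C_1)=\mu(C_1\cdots C_p)$. I expect the alignment claim to be the only genuine obstacle: everything else is bookkeeping about stable slots, whereas ruling out copies of $C_1$ that straddle $\cC_j$-boundaries is exactly what the marker and the single occurrence of $u^{(j-1)}_1$ are designed to guarantee.
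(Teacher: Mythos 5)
Your proof is correct and rests on the same two pillars as the paper's: the marker-based alignment of $\cC_j$-occurrences (the paper's Lemma \ref{lem:occur_after_multiple_of_li}) and the fact that an unpermuted word occupies the same stable slot in every element of $\cC_{j+1}$. The only cosmetic difference is that you conclude via the cylinder-set identity $[C_1]\cap X=[C_1\cdots C_p]\cap X$, whereas the paper concludes via equality of the relative frequencies of $C_1$ and $C_1\cdots C_p$ in $w$; these are two phrasings of the same observation.
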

\begin{proof}
	By Lemma \ref{lem:occur_after_multiple_of_li}, in $w$ both $C_1$ and $C_1 \cdots C_p$ occur at the positions which are multiples of $l_j$. The result follows from the fact that the relative frequencies of $C_1$ and $C_1 \cdots C_p$ in $w$ are the same.
\end{proof}
%The above lemma follows from the observation that the relative frequencies of $C_1$ and $C_1 \cdots C_p$ in $w$ are the same. Because of Lemma \ref{lem:occur_after_multiple_of_li}, we only need to check the number of occurrences at the positions which are multiples of $l_j$.

We now present the results on the growth rate of the size of atoms.

\begin{thm}\label{thm:SMB}
	Let $X$ be the subshift with topological entropy dimension 1/2 constructed in this section. Then the following hold.
    \begin{enumerate}
        %\item For each $x \in X$,
        \item For $\mu$-a.e. $x \in X$,
            \[ \limsup_{n \to \infty} - \frac{1}{n^\beta} \log {\mu(P_n(x))} \] has the critical value $1/2$.

        \item There is a set $\hat X \subset X$ of full measure with the property  that for any $\tau \in [0,1/2]$, there is an increasing sequence $\{n_j\}_{j \in \setN}$ such that for each $x \in \hat X$,
            \[ \lim_{j \to \infty} - \frac{1}{(n_j)^\beta} \log {\mu(P_{n_j}(x))} \] has the critical value $\tau$.
    \end{enumerate}
    %It has a variant of the Shannon-McMillan-Brieman property
\end{thm}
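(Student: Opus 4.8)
\textit{Proof strategy.}
The plan is to reduce both assertions to a single two-sided estimate for the measure of a cylinder, and then to read off each critical value by evaluating it along a suitably chosen scale. Fix $x$ and $n$, let $j$ be determined by $l_j\le n<l_{j+1}$, and (reducing as usual to $n=k l_j$ with $1\le k<N_j$) let $s=s_j(x,n)$ be the number of \emph{permuted} (unstable) level-$j$ blocks met by the window $[0,n)$ in the unique $\cC_j$-decomposition of $x$ (Lemma \ref{lem:unique_decomp}). The crux is to prove that $-\log\mu(P_n(x))$ is governed entirely by the top level $j$:
\[ -\log\mu(P_n(x)) = \Big(1+\tfrac{s}{2}+o(1)\Big)\log N_j, \qquad 0\le s\le \sqrt{k+1}+O(1). \]
I would derive this from Lemmas \ref{lem:occur_after_multiple_of_li} and \ref{lem:same_measure} (the first forces every occurrence of the word to respect the level-$j$ grid, the second shows that stable blocks carry no additional freedom), the uniform permutation count already used in \eqref{calculation_ED_geq}–\eqref{calculation_ED_leq} (which yields the factor $\Perm{\sqrt{N_j}}{s}$, with $\log\Perm{\sqrt{N_j}}{s}\sim\frac{s}{2}\log N_j$ by \eqref{perm}), and Lemma \ref{basic} (which gives $\log l_{j+1}\sim\log N_j$, so the alignment contribution is the $1\cdot\log N_j$ term).

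\textit{Part (1).}
For $\beta>\tfrac12$ the count is bounded uniformly in $x$ by $s\le\sqrt{k+1}+O(1)$, and since $\log N_j\asymp 2^{-j}\sqrt{l_j}$ and $\sqrt{k}\sqrt{l_j}=\sqrt{n}$, the estimate gives $-\log\mu(P_n(x))\le C\,2^{-j}\sqrt{n}$ for \emph{every} $x$; hence $-\log\mu(P_n(x))/n^\beta\le C\,2^{-j}n^{1/2-\beta}\to 0$ and the $\limsup$ vanishes. For $\beta<\tfrac12$ I would evaluate at $n=l_j$: for a.e.\ $x$ the prefix $x_{[0,l_j)}$ does not straddle an unstable block (an event of measure $O(N_j^{-1/2})$, hence summable and excluded eventually by Borel--Cantelli), so $s=O(1)$, a stable block pins the position and $\mu(P_{l_j}(x))\le C/l_{j+1}$, giving $-\log\mu(P_{l_j}(x))\ge(1-o(1))\log N_j\asymp 2^{-j}\sqrt{l_j}$ and therefore $-\log\mu/l_j^{\beta}\gtrsim 2^{-j}l_j^{1/2-\beta}\to\infty$. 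Thus the critical value is exactly $1/2$.

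\textit{Part (2).}
Let $a_j(x)\in\{1,\dots,N_j\}$ be the index, within its level-$(j{+}1)$ block, of the level-$j$ block containing $x_0$; unique ergodicity gives $\mu(a_j=i)=1/N_j$. Since $N_j$ grows super-exponentially (\eqref{iter}, Lemma \ref{basic}), $\sum_j N_j^{-1/2}<\infty$, so Borel--Cantelli shows that $\hat X:=\{x:\sqrt{N_j}\le a_j(x)\le N_j-\sqrt{N_j}\text{ for all large }j\}$ has full measure. For $x\in\hat X$ and any $n_j=k_j l_j$ with $k_j\le N_j^{1/4}$, the block-indices met lie in $(\sqrt{N_j},N_j]$, where consecutive squares are at distance $\ge 2\sqrt{a_j}\ge 2N_j^{1/4}>k_j$; hence $s_j\le 1$ and the key estimate yields $c_1\log N_j\le -\log\mu(P_{n_j}(x))\le c_2\log N_j$, i.e.\ $\log\!\big({-}\log\mu(P_{n_j}(x))\big)=(\tfrac12+o(1))\log l_j$ for every $x\in\hat X$. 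Now, given $\tau\in(0,\tfrac12]$, set $k_j=\floor{l_j^{(1-2\tau)/(2\tau)}}$, so $\log n_j\sim\tfrac{1}{2\tau}\log l_j$; given $\tau=0$, set $k_j=\floor{N_j^{1/4}}$, so $\log n_j\sim\tfrac14\log N_j\gg\log l_j$. In both cases $k_j\le N_j^{1/4}$ eventually, and a direct computation of $\log(-\log\mu)-\beta\log n_j$ shows the ratio $-\log\mu(P_{n_j}(x))/n_j^{\beta}$ tends to $0$ for $\beta>\tau$ and to $\infty$ for $\beta<\tau$, uniformly over $x\in\hat X$; hence the critical value is $\tau$.

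\textit{Main obstacle.}
The hard part is the key estimate. The delicate points are: verifying that levels below $j$ contribute no freedom (this is precisely what Lemma \ref{lem:same_measure} provides); controlling the alignment multiplicity for windows that are not block-aligned so that the additive error stays $o(\log N_j)$; and checking that the permutation count is genuinely uniform over admissible fillings, so that $\log\Perm{\sqrt{N_j}}{s}$ serves as both an upper and a lower bound. Once this is established, the remainder is bookkeeping: part (1) is a uniform-in-$x$ squeeze, and part (2) hinges on the Borel--Cantelli input that the super-exponential growth of $N_j$ (Lemma \ref{basic}) drives $a_j(x)$ into the square-sparse bulk for a.e.\ $x$, forcing $s_j$ to stay bounded along the chosen scale.
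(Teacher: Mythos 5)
Your strategy is essentially the paper's: the unique $\cC_j$-decomposition (Lemma \ref{lem:unique_decomp}), the grid-alignment and chain lemmas (Lemmas \ref{lem:occur_after_multiple_of_li} and \ref{lem:same_measure}), the count $\Perm{\sqrt{N_j}}{s}$ over permuted slots, a Borel--Cantelli selection of a full-measure set on which the origin avoids the square (permuted) positions at every large level, and scales $n_j\in[l_j,\,l_j\sqrt[4]{N_j}]$ to realize each $\tau$. Your good set is phrased via the slot index $a_j(x)$ lying in the bulk (allowing $s_j\le 1$) rather than via long unpermuted chains (forcing $s_j=0$), but this changes nothing. The upper-bound half of your ``key estimate'' is exactly the paper's frequency computation $\mu(P_n(x))\ge (\sqrt{N_j}-\sqrt{k})!/l_{j+2}$, and the two-sided form is only needed with unspecified constants, so granting it is reasonable.

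The one step that fails as written is your lower bound for (1) at the scale $n=l_j$: the claim $\mu(P_{l_j}(x))\le C/l_{j+1}$ with an absolute constant $C$, justified by ``a stable block pins the position''. Stability at level $j$ does not control the multiplicity, which comes from level $j-1$: a window of length $l_j$ straddling $B^{(j)}_0B^{(j)}_1$ sees only a prefix of $B^{(j)}_1$ covering, say, the first $m$ level-$(j-1)$ slots, which determines the permuted slots $i^2\le m$ but leaves the remaining $\sqrt{N_{j-1}}-\sqrt{m}$ permuted slots free; hence up to $(\sqrt{N_{j-1}}-\sqrt{m})!$ words of $\cC_j$ share that prefix (and up to $(\sqrt m)!$ share the visible suffix of $B^{(j)}_0$), i.e.\ a positive power of $N_j$, not a constant. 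The conclusion survives because the smaller of the two counts is at most $N_j^{c}$ with $c\le 1/\sqrt2<1$, so $-\log\mu(P_{l_j}(x))\ge(1-c)\log N_j$, which still yields critical value $1/2$; but you must either carry out that computation or do what the paper does, namely deduce the lower bound in (1) from (2) with $\tau=1/2$ after enlarging the window so that it wholly contains an unpermuted $\cC_j$-block, whose cylinder has measure exactly $1/l_{j+1}$. With that repair the remaining bookkeeping in both parts is correct.
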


%We begin with the following observations.

\begin{proof}%[The proof of Theorem \ref{thm:SMB}]
    Let $x \in X$. We first show that $\limsup_{n \to \infty} - \frac{1}{n^\beta} \log {\mu(P_n(x))} $ has the critical value not greater than $1/2$. By Lemma \ref{lem:unique_decomp} there is a unique decomposition of $x$ into a concatenation of words of $\cC_j$ for each $j \in \setN$. We may write $x = \cdots B^{(j)}_{-1} B^{(j)}_0 B^{(j)}_1 B^{(j)}_2 \cdots$, where $B^{(j)}_i \in \cC_j$ for each $i \in \setZ$ and $B^{(j)}_0$ is the unique word of $\cC_j$ containing the $0$-th coordinate of $x$. %Denote by $B^{(j)}_0$ the unique word occurring at the positions containing $0$-th coordinate.
    Note that $B^{(j)}_0$ is a subword of $B^{(j+1)}_0$ for each $j$. %is the unique element in $\cC_j$, occurring at the coordinate $0$.

    For each $n \in \setN$, there is a maximal $j$ such that $P_n(x)$ is a subword of $B^{(j)}_0 \cdots B^{(j)}_{k-1}$ with $k \geq 3$ and $B^{(j)}_1 \cdots B^{(j)}_{k-2}$ is a subword of $P_n(x)$. By the choice of $j$ we also have $k \leq 2 N_j$. Then since the maximal number of permuted words in $\cC_j$ occurring in $B^{(j)}_0 \cdots B^{(j)}_{k-1}$ is $\sqrt{k}$, we have
    \[
    \begin{split}
        \mu(P_n(x)) & \geq \mu(B^{(j)}_0 B^{(j)}_1 \cdots B^{(j)}_{k-1} ) \\
                    & = \lim_{l \to \infty} \frac{1}{l} \cdot \text{ number of occurrences of $B^{(j)}_0 B^{(j)}_1 \cdots B^{(j)}_{k-1}$ in $w_{[1,l]}$} \\
                    & \geq \frac{(\sqrt{N_j} - \sqrt{k})!}{l_{j+2}} = \frac{(\sqrt{N_j} - \sqrt{k})!}{l_{j+1} (\sqrt{N_j})!} \\
                    & \geq \frac{1}{l_{j+1}} ({\sqrt{N_j}}^{\sqrt{k}})^{-1}                    = \frac{1}{l_j} (N_j ({\sqrt{N_j}}^{\sqrt{k}}))^{-1} = (l_j ({\sqrt{N_j}}^{\sqrt{k} + 2}))^{-1}.
    \end{split}
    \]
    %As in the calculation of the topological dimension in Section \ref{section:Construction}, if $l_j \leq n < l_{j+1}$ we may assume that $n = k l_j$ for some $1 \leq k < N_j$. Hence
    Since $n \geq (k-2)l_j$, we obtain
    \[
        - \frac{1}{n^\beta} \log {\mu(P_n(x))} \leq \frac{1}{((k-2) l_j)^\beta} \log (l_j \cdot \sqrt{N_j}^{\sqrt{k} + 2}) \sim \frac{1}{2} \frac{\sqrt{k} + 2}{((k-2) l_j)^\beta } \log N_j.
    \]
    Since the limit of the last sequence has the same critical value as the limit of $\frac{1}{{(l_j})^\beta} \log {N_{j}}$, the limit superior in (1) has the critical value not greater than $1/2$. On the other hand, it is immediate from (2) with $\tau = \frac{1}{2}$ that the critical value of the limit superior is not less than $1/2$.

    \vspace{0.1cm}

    %For the proof of (2), we first claim that for $\mu$-a.e. $x \in X$, $B^{(j)}_0 (x)$ is an unpermuted word in $\cC_j$ for all large $j \in \setN$.

    Now we prove the second statement of the theorem. Let $\tilde X$ be the set of $x \in X$ such that $B_0^{(j)}(x)$ is an unpermuted word for all sufficiently large $j \in \setN$. We claim that $\mu(\tilde X) = 1$.  Let $L_j$ be the set of all $x \in X$ such that $B^{(j)}_0 (x)$ is a permuted word in $\cC_j$. Then we have $\mu(L_j) = \frac{\sqrt{N_j}}{N_j}$, so $\sum_j \mu(L_j) < \infty$. Now the Borel-Cantelli lemma proves the claim.

   % Note that each word $u$ in $\cC_{j+1}$ has $\sqrt{N_j} - 1$ permuted $\cC_j$-words, and in $u$ there are $\sqrt{N_j}$ (or $\sqrt{N_j} - 1$) sequences of consecutive unpermuted $\cC_j$-words. Note that the lengths of such sequences increase by $2 l_j$ except the first and the last ones, and the maximal length of such unpermuted words is about $2 \sqrt{N_j} \cdot l_j$.

    From now on we will assume that $N_j$ is a square number since, if not, a slightly modified argument will work. Note that every word in $\cC_{j+1}$ contains $\sqrt{N_j}$ isolated permuted $\cC_j$-words and there is a chain of consecutive unpermuted $\cC_j$-words between two adjacent permuted $\cC_j$-words. Let $\cU_j$ be the collection of those chains, that is,
    \[
        \cU_j = \{ u_1^{(j)}, u_3^{(j)}, u_5^{(j)} \cdots u_8^{(j)}, u_{10}^{(j)} \cdots u_{15}^{(j)}, \cdots, u_{i^2 + 1}^{(j)} \cdots u_{(i+1)^2 - 1}^{(j)}, \cdots \}
    \]
    Then the cardinality of $\cU_j$ is $\sqrt{N_j}$, and the numbers of unpermuted $\cC_j$-words in the chains in $\cU_j$ are $1, 1, 4, 6, 8, 10, \cdots, 2(\sqrt{N_j} - 1)$.
    %Note that every word in $\cC_{j+1}$ contains $\sqrt{N_j}$ isolated permuted $\cC_j$-words, and $\sqrt{N_j}$ sequences of consecutive unpermuted $\cC_j$-words. The numbers of unpermuted $\cC_j$-words in the sequences are $1, 1, 4, 6, 8, 10, \cdots, 2(\sqrt{N_j} - 1)$.

    Given $x \in \tilde X$ and $j \in \setN$ with $B^{(j)}_0 (x)$ unpermuted, let $C_1 \cdots C_{p(x,j)}$ be the chain in $\cU_j$ containing $B^{(j)}_0 (x)$. Also denote by $q(x,j)$ the integer with $C_{q(x,j)} = B^{(j)}_0 (x)$. Note that $p(x,j)$ and $q(x,j)$ are well defined for all large $j \in \setN$.

    %Given a fixed such $j$, let $C_1 \cdots C_{p(x,j)}$ be the maximal consecutive unpermuted words of $\cC_j$ lying in $B^{(j+1)}_0 (x)$.

    Fix $\frac{1}{4} < \eta < \frac{1}{2}$. For each $j \in \setN$, let $T_j$ be the set of all $x \in \tilde X$ such that $p(x,j) > (N_j)^\eta$ and $q(x,j) < p(x,j) - (N_j)^\frac{1}{4}$. Then we have %for \gamma = \min(1/4, 1-2\eta),
    \[
		\mu({T_j}^C) < \frac{1}{N_j} ( \sum_{i \leq {N_j^{\eta}} / 2} 2i + \sqrt{N_j} \cdot (N_j)^\frac{1}{4} )
		%\sum_{i = 1}^{ (N_j)^\eta } i + \sqrt{N_j} \cdot (N_j)^\frac{1}{4} < (N_j)^\gamma
    \]
    for all large $j$. Let $\hat X$ be the set of all $x \in \tilde X$ such that $x \in T_j$ for all large $j \in \setN$. Since $\sum_{j \in \setN} \mu({T_j}^C) < \infty$, by the Borel Cantelli Lemma we have $\mu(\hat X) = 1$.
    %and let $X'$ be the set of all point $x \in \tilde X$ such that $p(x,j) > (N_j)^\eta$ for all large $j \in \setN$. Since the set of all $x$ with $p(x,j) \leq (N_j)^\eta$ has measure $< \frac{(N_j)^{2\eta}}{N_j}$ and $\sum \frac{1}{{N_j}^\gamma} = 0$ for all $\gamma > 0$, we also have $\mu(X') = 1$ by Borel-Cantelli lemma.
    %Finally, let $\hat X$ be the set of all $x \in X'$ such that $q(x,j) < p(x,j) - \sqrt[4]{N_j}$ for all large $j \in \setN$. A similar application of Borel-Cantelli lemma gives that $\mu(\hat X) = 1$.
    %Intuitively, $\hat X$ consists of the points
    Intuitively, for any $x \in \hat X$, the word $B^{(j)}_0 (x)$ lies in the forepart of a long chain of consecutive unpermuted $\cC_j$-words in $B^{(j+1)}_0 (x)$.
    Note that for all $x \in \hat X$, $B^{(j)}_0 (x) \cdots B^{(j)}_{\sqrt[4]{N_j}}(x)$ is a chain of unpermuted $\cC_j$-words for all large $j \in \setN$.

%The following figure visualizes a typical location of $B^{(j)}_0(x)$ in $B^{(j+1)}_0(x)$ for $x \in \hat X$.
%Note that for $x \in \hat X$, $B^{(j)}_0 (x) \cdots B^{(j)}_{\sqrt[4]{N_j}}(x)$ is a sequence of unpermuted $\cC_j$-words for all large $j \in \setN$.

%\vspace{1cm}
%\begin{center}
%Figure 1 (to be added)
%\end{center}

%\vspace{1cm}

    Let $\tau \in [0,\frac{1}{2}]$ be given. % \alpha = 1/2 in this proof
    Since $\lim_{j \to \infty} \frac{1}{(l_j)^\beta} \log {N_j}$ has the critical value $\beta = 1/2$ and $\lim_{j \to \infty} \frac{1}{(l_j \cdot \sqrt[4]{N_j})^\beta} \log {N_j} = 0$ for all $\beta > 0$, there is a sequence ${n_j}$ such that $l_j \leq n_j \leq l_j \sqrt[4]{N_j}$ and $\lim_{j \to \infty} \frac{1}{(n_j)^\beta} \log {N_j}$ has the critical value $\tau$.
    For any $x \in \hat X$ and $j \in \setN$, by considering a $\cC_j$-word envelop of $P_{n_j}(x)$ as in the proof of (1), we see that $\mu(P_{n_j}(x))$ is almost same as $\mu(B^{(j)}_0 (x) \cdots B^{(j)}_p(x))$ for some $p$, which in turn is the same as $\mu(B^{j}_0 (x))$ by Lemma \ref{lem:same_measure}.
    %For any $x \in \hat X$ and $j \in \setN$, by considering a $\cC_j$-word envelop of $P_{n_j}(x)$ as in the proof of (1), $\mu(P_{n_j}(x))$ is almost same as $\mu(B^{(j)}_0 (x) \cdots B^{(k)}_p(x))$ for some $k$ and $p$, which in turn is same as $\mu(B^{k}_0 (x))$ by Lemma \ref{lem:same_measure}.
    Thus for each $x \in \hat X$, the sequence $\lim_{j \to \infty} - \frac{1}{(n_j)^\beta} \log {\mu(P_{n_j}(x))}$ has the same critical value as $\lim_{j \to \infty} \frac{1}{(n_j)^\beta} \log {N_j}$. This completes the proof of (2).
\end{proof}
%we have $p > 1/\sqrt{N_j}$, i.e., $B^{(j)}_0(x)$ lies in a sufficiently long piece of consecutive unpermuted words. By an application of Borel-Cantelli lemma we also have $\mu(M) = 1$.
%Finally, let $\hat X \subset M$ be the set of all $x \in M$ such that $B^{(j)}_0(x)$ lies in the first part of consecutive permuted words. If the part has a small measure compared to the length of consecutive unpermuted words, then we have $\mu(\hat X) = 0$. %A precise value will be given later.
%Now let $\tau \in [0,\alpha]$.

%Let $\cP$ be the $0$-th coordinate partition for $X$ and denote by $H(\cQ)$ the usual metric entropy for a partition $\cQ$ of $X$. Then Theorem \ref{thm:SMB} directly implies that $\limsup_{n \to \infty} \frac{1}{n^\beta} H(\bigvee_{i=0}^{n-1} \sigma^{-i} \cP)$ has the critical value $1/2$ and that $\liminf_{n \to \infty}$ $\frac{1}{n^\beta} H(\bigvee_{i=0}^{n-1} \sigma^{-i} \cP) = 0$ for all $\beta > 0$.

\begin{rem}
    (1) Let $\cP$ be the $0$-th coordinate partition for $X$ and denote by $H(\cQ)$ the usual metric entropy for a partition $\cQ$ of $X$. Then Theorem \ref{thm:SMB} directly implies that $\limsup_{n \to \infty} \frac{1}{n^\beta} H(\bigvee_{i=0}^{n-1} \sigma^{-i} \cP)$ has the critical value $1/2$ and that $\liminf_{n \to \infty}$ $\frac{1}{n^\beta} H(\bigvee_{i=0}^{n-1} \sigma^{-i} \cP) = 0$ for all $\beta > 0$. However, this does not guarantee that the system has metric entropy dimension $1/2$. Indeed, our system has metric entropy dimension $0$. (For the definition of metric entropy dimension, see \cite{DouHP14}).

    (2) It is not known whether Theorem 4.4 holds for general dynamical systems of entropy dimension $\alpha > 0$. We conjecture that the critical value of $\limsup$ equals topological entropy dimension and that of $\liminf$ equals metric entropy dimension. %whether $\limsup_{n \to \infty} - \frac{1}{n^\beta} \log {\mu(P_n(x))}$ attains topological entropy dimension as its critical value and $\liminf_{n \to \infty} - \frac{1}{n^\beta} \log {\mu(P_n(x))}$ does metric entropy dimension as its critical value.
\end{rem}

    %By looking at the partition of $\bigvee_{i=0}^{n-1} T^{-i} \cP$, it is not difficult to see that all of our systems have metric entropy dimension 0. It is not clear whether $\limsup_{n \to \infty} - \frac{1}{n^\beta} \log {\mu(P_n(x))}$ attains topological entropy dimension as its critical value and $\liminf_{n \to \infty} - \frac{1}{n^\beta} \log {\mu(P_n(x))}$ does metric entropy dimension as its critical value.
%\vspace{0.1cm}
%\newpage
We shall show similar results hold for the first return time. %Given a sequence $\{a_n\}_{n \in \setN}$, there is at most one
We begin with a lemma.
\begin{lem}\label{lem:SMB_to_returntime_upperbound}
    Let $X$ be a subshift and $\mu$ be an invariant measure on $X$. Suppose that $\limsup_{n \to \infty} - \frac{1}{n^\beta} \log {\mu(P_n(x))}$ has the critical value $c$ for $\mu$-a.e. $x \in X$. Then the critical value of $\limsup_{n \to \infty} \frac{1}{n^\beta} \log {R_n(x)}$ is less than or equal to $c$ for $\mu$-a.e. $x \in X$.

    %It has a variant of the Shannon-McMillan-Brieman property
\end{lem}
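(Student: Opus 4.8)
The plan is to unwind what the conclusion asserts: saying that the critical value of $\beta \mapsto \limsup_{n\to\infty}\frac{1}{n^\beta}\log R_n(x)$ is at most $c$ amounts to proving that for every fixed $\beta > c$ one has $\limsup_{n\to\infty}\frac{1}{n^\beta}\log R_n(x) = 0$ for $\mu$-a.e.\ $x$ (the reverse inequality being automatic, since $R_n(x)\ge 1$ forces the limsup to be nonnegative). So the whole statement reduces to an almost-sure upper bound on $\log R_n(x)$ in terms of $-\log\mu(P_n(x))$, followed by feeding in the hypothesis on the critical value of the latter.

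The engine is a first-moment (Kac-type) estimate on each cylinder. Fix $n$ and a word $w\in\B_n(X)$ with $\mu([w])>0$. For $x\in[w]$ the quantity $R_n(x)$ is exactly the first return time of $x$ to the set $[w]=P_n(x)$ under $\sigma$; by Poincar\'e recurrence this is finite $\mu$-a.e., and Kac's recurrence identity gives $\int_{[w]}R_n\,d\mu \le 1$ (the integral equals the measure of the set of points whose orbit meets $[w]$, which is at most $1$; no ergodicity is required). Markov's inequality then yields $\mu\{x\in[w] : R_n(x) > s\}\le 1/s$ for every $s>0$. Taking $s = n^2/\mu([w])$ and summing over all words $w\in\B_n(X)$ of positive measure, I obtain
\[
    \mu\bigl(\{x : R_n(x)\,\mu(P_n(x)) > n^2\}\bigr)\;\le\;\sum_{w}\frac{\mu([w])}{n^2}\;\le\;\frac{1}{n^2},
\]
where I use that $\mu$-a.e.\ $x$ satisfies $\mu(P_n(x))>0$, as the union of the null cylinders has measure zero.

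Since $\sum_n n^{-2}<\infty$, the Borel--Cantelli lemma produces a full-measure set $G_0$ on which, for all sufficiently large $n$, $R_n(x)\le n^2/\mu(P_n(x))$, and hence $0\le \log R_n(x)\le 2\log n -\log\mu(P_n(x))$. Intersecting $G_0$ with the full-measure set $G_1$ on which the hypothesis holds (the critical value of $\beta\mapsto\limsup_n -\frac{1}{n^\beta}\log\mu(P_n(x))$ equals $c$), I fix $x\in G_0\cap G_1$ and any $\beta>c$. Dividing the displayed bound $\log R_n(x)\le 2\log n-\log\mu(P_n(x))$ by $n^\beta$ and taking $\limsup$, the term $\frac{2\log n}{n^\beta}$ vanishes, while $\limsup_n\frac{-\log\mu(P_n(x))}{n^\beta}=0$ because $\beta>c$ exceeds the critical value. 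Thus $\limsup_n\frac{1}{n^\beta}\log R_n(x)=0$ for every $\beta>c$, which is precisely the assertion that the critical value of the return-time limsup is at most $c$.

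The only place that needs genuine care is the cylinderwise bound $\int_{[w]}R_n\,d\mu\le 1$: the lemma assumes only an invariant (not necessarily ergodic) measure, so I should invoke the general form of Kac's formula for the first-return time to a set rather than its ergodic special case, and I must restrict attention to cylinders of positive measure so that the Markov step and the summation are legitimate. Everything else is a routine Borel--Cantelli argument, and the passage to the critical value is immediate once the almost-sure bound $R_n(x)\le n^2/\mu(P_n(x))$ is in hand. A secondary point worth a sentence is the uniformity of the quantifiers: because this bound holds on a single full-measure set \emph{independent of} $\beta$, the conclusion for all $\beta>c$ follows on one and the same full-measure set, with no need for a countable exhaustion of $\beta$-values.
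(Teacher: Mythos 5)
Your proof is correct. The core mechanism is the same as the paper's: both rest on the bound $\mu\{x\in D: R_n(x)>s\}\le 1/s$ for a cylinder $D$ (the paper proves it by observing that $\sigma^i D_0$, $1\le i\le s$, are pairwise disjoint sets of equal measure, which is exactly the disjointness argument underlying Kac's inequality), followed by Borel--Cantelli and the hypothesis on $\mu(P_n(x))$. Where you differ is in the packaging. The paper fixes three exponents $\gamma<\alpha<\alpha'$ straddling $c$, uses the \emph{fixed} threshold $s=\exp(n^\alpha)$, and must therefore also count the atoms of measure $>\exp(-n^\gamma)$ (at most $\exp(n^\gamma)$ of them) to get $\mu(Q_n)\le e^{n^\gamma-n^\alpha}$ summable; the exponents are then swept away at the end. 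You instead choose the threshold \emph{adaptively}, $s=n^2/\mu([w])$ per cylinder, which removes the need to count atoms entirely and yields the cleaner intermediate statement that $R_n(x)\le n^2/\mu(P_n(x))$ for all large $n$, $\mu$-a.e. --- i.e. the standard Ornstein--Weiss-type upper bound $\limsup_n \frac1n\log\bigl(R_n(x)\mu(P_n(x))\bigr)\le 0$. This is slightly stronger than what the lemma asks for and makes the dependence on the hypothesis transparent (it enters only in the final division by $n^\beta$), at the modest cost of invoking Kac's inequality in its non-ergodic form; your closing remarks about positive-measure cylinders and the $\beta$-uniformity of the full-measure set are exactly the right points to flag.
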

\begin{proof}

    %Denote by $\cP$ the $0$-th coordinate partition for $X$.
    Let $\epsilon > 0$ be given. Take $\alpha' > \alpha > \gamma > c$. Fix $n \in \setN$. For each $D \in \bigvee_{i=0}^{n-1} \sigma^{-i} \cP$, consider the set
    \[ D_0 = \{ x \in D : R_n(x) > \exp(n^{\alpha}) \}. \]
    Then for all $1 \leq i, j \leq \exp(n^{\alpha})$ with $i \neq j$, the sets $\sigma^i D_0$ and $\sigma^j D_0$ do not intersect. So we have $\mu(D_0) < \exp(-n^{\alpha})$.
    Now, let
    \[
            Q_n = \{ x \in X :~~  \mu(P_n(x)) > \exp(-n^\gamma)  \text{ and } R_n(x) > \exp(n^{\alpha}) \}.
            %Q_n = \{ x \in X :~~ & x \in D \text{ for some } D \in \bigvee_{i=0}^{n-1} \sigma^{-i} \cP \text{ with $\mu(D) > \exp(-n^\gamma)$} \\
            %&\text{and } R_n(x) > \exp(n^{\alpha}) \}.
    \]
    Since the number of $D$'s in $\bigvee_{i=0}^{n-1} \sigma^{-i} \cP$ with $\mu(D) > \exp(-n^\gamma)$ is at most $\exp(n^{\gamma})$, the measure of $Q_n$ can be estimated by
    \[ \mu(Q_n) \leq e^{-n^{\alpha}} \cdot e^{n^{\gamma}} = e^{n^{\gamma} - n^{\alpha}}, \]
    so we have $\sum \mu(Q_n) < \infty$. By the Borel-Cantelli lemma, it follows that for $\mu$-a.e. $x \in X$, $x \notin Q_n$ for all large $n$. %Hence $R_n(x) \leq \exp(n^\alpha)$ for all large $n$.
    %\begin{equation}\label{eq:SMB_to_returntime}
    %\end{equation}

    By the assumption, for $\mu$-a.e. $x \in X$ we have $\limsup_{n \to \infty} - \frac{1}{n^\gamma} \log {\mu(P_n(x))} = 0$. Hence $\mu(P_n(x)) > \exp(-n^{\gamma})$  for all large $n \in \setN$. As $x \notin Q_n$ for all large $n \in \setN$, we have $R_n(x) \leq \exp(n^{\alpha})$ for such $n \in \setN$.
    %By Theorem \ref{thm:SMB} (1), for large $n \in \setN$, $\mu(D) > \exp(-n^{\gamma})$ for elements $D$ in $\bigvee_{i=0}^{n-1} \sigma^{i} \cP$. For $\mu$-a.e. $x \in X$, for large $n$, we have $x \in D$ with $D$ satisfying the property \ref{eq:SMB_to_returntime}. Hence the number of such $D$'s in $\cP_n$ is at most $\exp(n^{\gamma})$.
    %Let $Q_n = \{ x \in X : x \in D \text{ satisfying (\ref{eq:SMB_to_returntime}) and } R_n(x) > \exp(n^{\alpha}) \}$. Then the measure of $Q_n$ can be estimated by
    It follows that
        \[ \limsup_{n \to \infty} \frac{\log R_n(x)}{n^{\alpha'}} = 0 \]
    for a.e. $x \in X$. Since the last inequality holds for all $\alpha' > c$, the proof is complete.
\end{proof}

\begin{thm}\label{thm:RT}
    Let $X$ be as in Theorem \ref{thm:SMB}. Then the following hold.
    \begin{enumerate}
        \item For $\mu$-a.e. $x \in X$,
            \[ \limsup_{n \to \infty} \frac{1}{n^\beta} \log {R_n(x)} \] has the critical value $1/2$.
        \item There is a set $\hat X \subset X$ of full measure with the property  that for any $\tau \in [0,1/2]$, there is an increasing sequence $\{n_j\}_{j \in \setN}$ such that for each $x \in \hat X$,
            \[ \lim_{j \to \infty} \frac{1}{(n_j)^\beta} \log {R_{n_j}(x)} \] has the critical value $\tau$.
    \end{enumerate}
    %It has a variant of the Shannon-McMillan-Brieman property
\end{thm}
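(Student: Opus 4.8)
The plan is to derive both assertions from the size-of-atoms results of Theorem~\ref{thm:SMB} together with the bridging Lemma~\ref{lem:SMB_to_returntime_upperbound}, the one genuinely new ingredient being an \emph{exact} computation of the return time for the good points. The easy half is the upper bound in (1): since Theorem~\ref{thm:SMB}(1) asserts that $\limsup_{n\to\infty} -\frac{1}{n^\beta}\log\mu(P_n(x))$ has critical value $1/2$ for $\mu$-a.e.\ $x$, Lemma~\ref{lem:SMB_to_returntime_upperbound} applied with $c=1/2$ immediately gives that the critical value of $\limsup_{n\to\infty}\frac{1}{n^\beta}\log R_n(x)$ is at most $1/2$ for $\mu$-a.e.\ $x$. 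I would not prove the matching lower bound in (1) directly; instead I would obtain it as the special case $\tau=1/2$ of part (2): along the subsequence $\{n_j\}$ produced there the limit has critical value $1/2$, so $\limsup_n \frac1{n^\beta}\log R_n(x)$ has critical value at least $1/2$, and combined with the upper bound this forces the value $1/2$. Thus the whole theorem reduces to proving (2).

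For (2) I would reuse verbatim the full-measure set $\hat X$ and, for each $\tau$, the sequence $\{n_j\}$ with $l_j\le n_j\le l_j\sqrt[4]{N_j}$ and $\lim_j\frac{1}{(n_j)^\beta}\log N_j$ of critical value $\tau$, both manufactured in the proof of Theorem~\ref{thm:SMB}(2). The heart of the matter is the structural claim that for every $x\in\hat X$ and all large $j$,
\[ R_{n_j}(x)=l_{j+1}=l_j N_j. \]
To see this, recall that for $x\in\hat X$ the word $B^{(j)}_0(x)$ is an \emph{unpermuted} $\cC_j$-word lying in the forepart of a chain of at least $\sqrt[4]{N_j}$ consecutive unpermuted $\cC_j$-words, so that $x_{[0,n_j)}$ is a concatenation of stable $\cC_j$-words beginning with $B^{(j)}_0(x)$. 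Because the members of $\cC_j$ are pairwise distinct, each word of $\cC_j$ occurs exactly once in every word of $\cC_{j+1}$, and a stable word occupies the \emph{same} relative position in every word of $\cC_{j+1}$; hence $B^{(j)}_0(x)$ appears exactly once per $\cC_{j+1}$-block of $x$. Together with Lemma~\ref{lem:occur_after_multiple_of_li}, which forces every return of $x$ to $P_{n_j}(x)$ to occur at a multiple of $l_j$, this shows that the stable chain $x_{[0,n_j)}$ cannot reappear within the current $\cC_{j+1}$-block and does reappear, at the identical relative position, in the next one; the displacement between two consecutive $\cC_{j+1}$-blocks is exactly $l_{j+1}$.

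Granting the claim, the conclusion of (2) is a short computation. We have $\log R_{n_j}(x)=\log l_{j+1}=\log l_j+\log N_j$, and using the estimates behind Lemma~\ref{basic} one checks that $\log l_j=\log l_1+\sum_{i<j}\log N_i$ is dominated by its last summand while $\log N_j/\log N_{j-1}\asymp \tfrac12\sqrt{N_{j-1}}\to\infty$, so that $\log l_j=o(\log N_j)$ and therefore $\log R_{n_j}(x)\sim\log N_j$. Consequently $\frac{1}{(n_j)^\beta}\log R_{n_j}(x)$ and $\frac{1}{(n_j)^\beta}\log N_j$ share the same critical value $\tau$, which is exactly the assertion of (2).

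The routine parts above are the asymptotic bookkeeping $\log l_j=o(\log N_j)$ and the invocation of Lemma~\ref{lem:SMB_to_returntime_upperbound}. The one place demanding care, and the step I expect to be the main obstacle, is the exact return-time identity $R_{n_j}(x)=l_{j+1}$, specifically the lower estimate $R_{n_j}(x)\ge l_{j+1}$: one must rule out any earlier reappearance of the stable chain, including reappearances straddling a $\cC_{j+1}$-boundary. This is precisely where both features of the variant construction are used, namely the marker word $001$ (through Lemma~\ref{lem:occur_after_multiple_of_li}, aligning all returns to $\cC_j$-boundaries) and the rigidity of stable positions (a fixed $\cC_j$-word appears once, and always in the same slot, within each $\cC_{j+1}$-word). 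If one only wanted part (1) it would suffice to prove $R_{n_j}(x)\ge c\,l_{j+1}$ for some constant $c>0$, but the clean identity makes part (2) transparent and handles every $\tau\in[0,1/2]$ uniformly in $x\in\hat X$.
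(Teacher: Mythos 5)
Your proposal is correct and follows essentially the same route as the paper: the upper bound in (1) via Lemma \ref{lem:SMB_to_returntime_upperbound} applied to Theorem \ref{thm:SMB}(1), the lower bound as the case $\tau=1/2$ of part (2), and part (2) via the exact identity $R_{n_j}(x)=l_{j+1}$ on the set $\hat X$ with the sequence $\{n_j\}$ inherited from Theorem \ref{thm:SMB}(2). You in fact supply slightly more detail than the paper does on the two points it leaves implicit --- ruling out early returns of the stable chain (via Lemma \ref{lem:occur_after_multiple_of_li} and the uniqueness of the $\cC_j$-decomposition) and the bookkeeping $\log l_{j+1}\sim\log N_j$ --- so nothing further is needed.
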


\begin{proof}

    By Lemma \ref{lem:SMB_to_returntime_upperbound} and Theorem \ref{thm:SMB}(1) it follows that the limsup in (1) has the critical value no greater than $1/2$. The inequality of the other direction is an immediate consequence of the second result of the theorem with $\tau = 1/2$.
    %We will first show that the critical value for the return time is not greater than the critical value for the SMB-like limit supremum. %The proof resembles the one in \cite{OrnW93}.

    For the proof of (2), let $\hat X$ be the same set as in the proof of Theorem \ref{thm:SMB}. Given $\tau \in [0,1/2]$, take $\{n_j\}_{j \in \setN}$ as in (2) of Theorem \ref{thm:SMB}. Let $x \in \hat X$. There is a chain of unpermuted $\cC_j$-words $B^{(j)}_0 \cdots B^{(j)}_{k-1}$, $k \geq 3$, such that $P_{n_j}(x)$ is a subword of $B^{(j)}_0 \cdots B^{(j)}_{k-1}$ and $P_{n_j}(x)$ contains $B^{(j)}_1 \cdots B^{(j)}_{k-2}$ as a subword.
    %Then since the maximal number of permuted words occurring in $B^{(j)}_0 \cdots B^{(j)}_k$ is $\sqrt{k}$, we have
    Then both $B^{(j)}_0 \cdots B^{(j)}_{k-1}$ and $B^{(j)}_1 \cdots B^{(j)}_{k-2}$ occur exactly after the time $l_{j+1}$, so $R_n(x) \leq l_{j+1}$. By the uniqueness of the decomposition into $\cC_j$-words and by Lemma \ref{lem:occur_after_multiple_of_li} it follows that $R_{n_j}(x) = l_{j+1} = l_j N_j$. It is easy to see that the limit of $\frac{1}{{n_j}^\beta} \log R_{n_j}(x)$ has the critical value $\tau$.
    %The remaining part of the proof goes similarly as in Theorem \ref{thm:SMB}.
\end{proof}

%We remark that all the results also hold for the one-sided subshift $X^+$.

%\begin{rem}
    %By looking at the partition of $\bigvee_{i=0}^{n-1} T^{-i} \cP$, it is not difficult to see that all of our systems have metric entropy dimension 0. It is not clear whether $\limsup_{n \to \infty} - \frac{1}{n^\beta} \log {\mu(P_n(x))}$ attains topological entropy dimension as its critical value and $\liminf_{n \to \infty} - \frac{1}{n^\beta} \log {\mu(P_n(x))}$ does metric entropy dimension as its critical value.
%\end{rem}

%\vspace{0.3cm}
%\section{Entropy generating sequences and metric entropy dimension}

\vspace{0.4cm}
%\section{Rigidity and weakly mixing property}
\section{Further properties and remarks}\label{sec:further}

Sections 3 and 4 give a general method to construct strictly ergodic subshifts with positive entropy dimension. With slight modifications, we can construct such systems with specific topological and measure theoretic properties.

\subsection{Entropy generating sequence} %Let $(X,\sigma)$ be a subshift and $\cP$ be the $0$-th coordinate partition. An in
Let $S = \{s_i\}_{i \in \setN}$ be an increasing sequence of natural numbers. It is called an \emph{entropy generating sequence} for a subshift $(X,\sigma)$ if $\liminf_{n \to \infty} \frac{1}{n} \log \cN( \bigvee_{i=1}^{n} \sigma^{-s_i} \cP ) > 0$. The \emph{upper dimension} of $S$ is defined by
\[ \overline D(S) = \inf \{ \tau \geq 0 : \limsup_{n \to \infty} \frac{n}{(s_n)^\tau} = 0\}. \]
The intuitive idea of an entropy generating sequence is to specify positions where the independence occur.
It is known that the upper entropy dimension $\overline D(X,\sigma)$ defined in Section \ref{sec:background} equals the supremum of $\overline D(S)$ over all entropy generating sequences $S$ \cite[Theorem 3.10]{DouHP13}. For a subshift, there is an entropy generating sequence attaining the supremum. %In general such a sequence is not easy to describe.

It is intricate to describe an entropy generating sequence for a general system constructed in Section 3. However, imposing some additional conditions in the process of the construction, we can obtain a system of which an entropy generating sequence is rather obvious. Suppose that the $j$-th word set $\cC_j$ is constructed by the induction. Recall that $\cC_j$ consists of $N_j$ words of length $l_j$ and that $N_{j+1} = (\sqrt{N_j})!$ and $l_{j+1} = l_j N_j$. Any word $u$ in $\cC_j$ can be written uniquely as $u = u_1 u_2 \cdots u_{N_{j-1}}$ where $u_k$'s are words in $\cC_{j-1}$, and distinguished by the sequence $\{u_k\}_{k \in P_{j-1}}$, where  $P_{j-1} = \{2\} \cup \{i^2: 2 \le i \le \sqrt{N_{j-1}}\}$. We consider the subset
\[ \cD_j = \{ u \in \cC_j : \{ u_k \}_{k \in Q_{j-1}} \text{ are distinct } \} \]
where $Q_{j-1} = \{2\} \cup \{i^2: 2 \le i \le \sqrt{N_{j-1}}/2\}$. The cardinality of $\cD_j$ is $\Perm{\sqrt{N_{j-1}}}{\sqrt{N_{j-1}}/2}$. Now we order the elements of $\cC_j$ so that the permuted positions $k \in P_j$ are occupied by elements in $\cD_j$: if
\[ u_1^{(j+1)} = u_1^{(j)}u_2^{(j)}u_3^{(j)}u_4^{(j)}u_5^{(j)} \cdots u_{N_j}^{(j)} \]
is the word formed by concatenating all the words of $\cC_j$ in order, then $u_k^{(j)}$ are in $\cD_j$ for all $k \in P_j$. Put $S_1 = [0, l_1]$,
\[
    S_j = \bigcup_{k \in Q_j} \left( k \cdot l_{j-1} + S_{j-1} \right) ~~~~~~~~~\text{ (for $j > 1$})
\]
and $S = \bigcup_{j \in \setN} S_j$. Note that
\[ |S_j| = \frac{l_1}{2^{j-1}} \prod_{i=1}^{j-1} \sqrt{N_i}.  \]
By a straightforward calculation it can be seen that $\liminf_{j \to \infty} \frac{1}{|S_j|} \log N_j > 0$, and we have $\overline D(S) = 1/2$. %Since $|S_j| = |S_1| \cdot \frac{1}{2^{j-1}} \prod_{i=1}^{j-1} \left( \sqrt{N_i} - 1 \right)$, we have $\overline D(G) = \frac{1}{2}$. Also a calculation shows $\liminf_{j \to \infty} \frac{1}{|S_j|} \log N_j > 0$, which implies that $S = \bigcup_{j \in \setN} S_j$ is an entropy generating sequence.

%For arbitrary collections $\cC_j$, with a little effort, for each $j \in \setN$ we may also find half of permuted words in which the independence occur mostly in those positions. Summarizing this, we have the following result.
%\begin{prop}\label{prop:entropy_generating_sequence}
%    The subshift constructed in Section 2 has an entropy generating sequence $S$ with $\overline D(S) = \frac{1}{2}$.
%\end{prop}

%To construct an entropy generating sequence for a system with topological entropy dimension $\alpha$ for given $\alpha \in (0,1)$, let $S_1$ as before and start with $S_j = \bigcup_{k=2}^{{(N_{j-1})}^\alpha} \left( k^{\frac{1}{\alpha}} \cdot l_{j-1} + S_{j-1} \right)$. The construction of a sequence goes as in the case $\alpha = 1/2$.

\vspace{0.3cm}
\subsection{Weak mixing} The subshift constructed in Section 3 is not topologically weak mixing (hence not metrically weak mixing), since for any $u \in \cC_j$, the set of $k$'s with $\sigma^{kl_j} [u] \cap [u] = \emptyset$ has positive density.

By inserting a spacer symbol at each step, it is possible to construct a metrically and topologically mixing subshift. Let $b$ be a symbol other than $0$ and $1$. Given $\cC_j$ with $j \in \setN$, let $u_1^{(j+1)}$ be the concatenation of $\cC_j$ words with the spacers in front of each permuted words:
%In the remaining of this section, we consider a variant of the system constructed in Sections 2 and 3 and show that it is (metrically and topologically) weakly mixing and metrically rigid. Let $b$ be a symbol other than $0$ and $1$. This will be used as a spacer symbol. Given $j \in \setN$ and $\cC_{j}$, we let $u_1^{(j+1)}$ with the markers in front of each permuted words, so
    \[
        u_1^{(j+1)} = u_1^{(j)} u_2^{(j)} u_3^{(j)} b u_{4}^{(j)} u_5^{(j)} \cdots u_8^{(j)} b u_{9}^{(j)} u_{10}^{(j)} \cdots u_{N_j}^{(j)},
    \]
and the collection $\cC_{j+1}$ is obtained by permuting the subwords $u_{i^2}^{(j)}$ for $i > 1$ and leaving the others fixed. Hence a typical word in $\cC_{j+1}$ is of the form
\[ u_1^{(j)} u_2^{(j)} u_3^{(j)} b u_{\pi(4)}^{(j)} u_5^{(j)} \cdots u_8^{(j)} b u_{\pi(9)}^{(j)} u_{10}^{(j)} \cdots u_{N_j}^{(j)},  \]
where $\pi$ is a permutation on the set $\{ i^2 : 1 < i \leq \sqrt{N_j} \}$.
Since $b$ occurs rarely, it does not affect much on the calculations in the previous sections: Our new system $X$ is strictly ergodic, has entropy dimension $1/2$, and satisfies Theorem \ref{thm:SMB} and Theorem \ref{thm:RT}. A similar argument in \S 5.2 applies to $X$, hence $X$ is also metrically rigid.

As $X$ is strictly ergodic, metric weak mixing implies topological weak mixing. The existence of a spacer symbol forces $X$ to be weakly mixing by an approximation argument. Suppose that $f : X \to \setC$ is a measurable eigenfunction with eigenvalue $\lambda$. We may assume that $|f| = 1$. Let $\epsilon > 0$ be small enough. By letting $\cA_j = \{ \sigma^i([u]) : u \in \cC_j \text{ is unpermuted and } 0 \leq i < l_j \}$, it is easy to see that the Borel $\sigma$-algebra of $X$ is generated by $\cA_j$'s, $j \in \setN$. Note that $\cA_j$ does not cover $X$, but $\mu(\bigcup \cA_j ) > (1 - \frac{1}{l_j}) ( 1 - \frac{1}{\sqrt{N_j}})$.

There is a sequence of simple functions $f_j$ converging to $f$ and for each $j \in \setN$ and $f_j$ is constant on each element of $\cA_j$. Then there is $j$ with $\mu\{x: |f(x) - f_j(x)| < \epsilon \} > 1 - \epsilon$ and $\mu(\bigcup \cA_j) > 1 - \epsilon$. Let $F = \{x: |f(x) - f_j(x)| < \epsilon \}$.
 %$F \subset X$ with $\mu(F) > 1 - \epsilon$ and a simple function $f_j$ which is constant on each element in $\cA_j$ for some $j \in \setN$, such that $ |f(x) - f_j(x)| < \epsilon$ for $x \in F$. We may assume that $j$ is large so that $\mu(\bigcup \cA_j) > 1 - \epsilon$.

Now there is $B = \sigma^i([u]) \in \cA_j$ %, where $u \in \cC_j$ is an unpermuted word and $0 \leq i < l_j$,
such that $\mu(B \cap F) > (1- 2 \epsilon) \mu(B)$.
If not, then we have
    \[ 1 - 2\epsilon < \mu(F) - \epsilon < \mu(\bigcup \cA_j \cap F) < (1-2\epsilon) \mu(\bigcup \cA_j) < 1-2\epsilon, \]
which is a contradiction. Without loss of generality we may assume $i = 0$.

For each $w \in \cC_{j+2}$, we define
\[ \cI_w = \{ 0 \leq k < l_{j+2} : \sigma^{k}([w]) \subset B \}. \]
Then it is easy to see that $|\cI_w| = N_{j+1}$. An integer $k \in \cI_w$ is called \emph{good} if
$\mu(\sigma^k([w]) \cap F) > (1-\sqrt{2\epsilon}) \mu([w])$ and \emph{bad} otherwise. We will show that there exists some $v \in \cC_{j+2}$ such that $\cI_v$ has at least $(1-\sqrt{2\epsilon}) N_{j+1}$ good members.
\begin{proof}
Suppose not. Then for all $w \in \cC_{j+2}$ there are at least $\sqrt{2\epsilon} \cdot N_{j+1}$ bad members of $\cI_w$. Since $\sigma^k([w])$ is disjoint for $0 \leq  k < l_{j+2}$, we have
    \[
    \begin{split}
        \mu(\bigcup_{k \in \cI_w} \sigma^k([w]) \cap F) & \leq
        \mu(\bigcup_{\substack{ k \in \cI_w \\ k : good}} \sigma^k([w])) + \mu(\bigcup_{\substack{ k \in \cI_w \\ k : bad}} \sigma^k([w]) \cap F) \\
        & \leq (1-\sqrt{2\epsilon}) \mu(\bigcup_{k \in \cI_w} \sigma^k([w])) + \sqrt{2\epsilon} N_{j+1} \cdot ( 1 - \sqrt{2\epsilon} ) \mu([w]) \\
        & < (1-\sqrt{2\epsilon}) \mu(\bigcup_{k \in \cI_w} \sigma^k([w])) + \sqrt{2\epsilon} ( 1 - \sqrt{2\epsilon} ) \mu(\bigcup_{k \in \cI_w} \sigma^k([w])) \\
        & = ( 1 - 2\epsilon ) \mu(\bigcup_{k \in \cI_w} \sigma^k([w])).
    \end{split}
    \]
As $B$ is the disjoint union of all $\sigma^k([w])$'s with $w \in \cC_{j+2}$ and $k \in \cI_w$, by summing over all $w \in \cC_{j+2}$ we have $\mu(B \cap F) \leq (1-2\epsilon) \mu(B),$ which is a contradiction.
\end{proof}

Let $\cI_v = \{ k_1 < k_2 < \cdots < k_{N_{j+1}} \}$ be ordered. Note that $k_i - k_{i-1}$ is $l_{j+1} + 1$ if $i$ is a square number bigger than $1$, and $l_{j+1}$ otherwise.
%Note that in $D$, there are at least $(1-\sqrt{2\epsilon}) \cdot N_{j+1}$ \emph{good} indices $k$, i.e.,  we have $\mu(\sigma^k(D) \cap F) > (1-\sqrt{2\epsilon}) \mu(D)$ for such $k$. %if $u$ occurs in $D$, i.e., $\sigma^k(D) \subset B$ then $\mu(\sigma^k(D) \cap F) > (1-\sqrt{2\epsilon}) \mu(D)$ hence with probability $(1-\sqrt{2 \epsilon})$ we have $|f - f_j| < \epsilon$ on $\sigma^k(D)$.
    \vspace{0.2cm}
    \begin{claim}
        There are good members $k_{i_1}, k_{i_2}, k_{i_3}, k_{i_4} \in \cI_v$ and a positive integer $c$ such that $c \cdot l_{j+1} = k_{i_2} - k_{i_1} - 1 = k_{i_4} - k_{i_3}$.
    \end{claim}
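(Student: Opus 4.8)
The claim isolates two length-$c$ return gaps in $\cI_v$ that differ by exactly $1$ (one landing on a multiple of $l_{j+1}$, the other one more than a multiple), which is what will later force the eigenvalue $\lambda$ to be trivial. To produce such a configuration I would convert the statement into a counting problem on the index set $\{1,\dots,N_{j+1}\}$ and then locate the four indices against the small set of bad positions. First I would record the exact spacing in $\cI_v$. Since each unit step $k_{i-1}\mapsto k_i$ contributes $l_{j+1}$, together with an extra $+1$ precisely when $i$ is a square larger than $1$, for $1\le a<b\le N_{j+1}$ we have
\[
k_b-k_a=(b-a)\,l_{j+1}+s(a,b),\qquad s(a,b):=\#\{\,a<i\le b:\ i\ \text{ is a square }>1\,\}.
\]
Thus if $i_1<i_2$ satisfy $i_2-i_1=c$ and $s(i_1,i_2)=1$ then $k_{i_2}-k_{i_1}-1=c\,l_{j+1}$, while if $i_3<i_4$ satisfy $i_4-i_3=c$ and $s(i_3,i_4)=0$ then $k_{i_4}-k_{i_3}=c\,l_{j+1}$. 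Hence it suffices to produce good indices $i_1<i_2$ and $i_3<i_4$ with a common gap $c=i_2-i_1=i_4-i_3$ so that $(i_1,i_2]$ contains exactly one square $>1$ and $(i_3,i_4]$ contains none; the integer $c$ is then the one the claim demands.

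Next I would fix the window length and count. Writing $R=\sqrt{N_{j+1}}$, the indices run over $\{1,\dots,R^2\}$, the relevant squares are $4,9,\dots,R^2$ (about $R$ of them), and I would take $c=\floor{R/4}$. For every square $m^2$ with $(c+1)/2\le m\le R-1$ the two neighbouring squares lie at distance $\ge 2m-1>c$, so each of the $c$ left-endpoints $a\in[m^2-c,\,m^2-1]$ gives a window $(a,a+c]$ enclosing the single square $m^2$; these are distinct for distinct $m$, yielding at least $c\,(R-1-(c+1)/2)$ one-square windows, a positive fraction of $R^2$. Likewise each square sits in at most $c$ windows, so at least $R^2-cR$ windows contain no square. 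A window $(a,a+c]$ fails to be good-ended only when $a$ or $a+c$ is bad, and $\cI_v$ has at most $\sqrt{2\epsilon}\,N_{j+1}=\sqrt{2\epsilon}\,R^2$ bad members, so at most $2\sqrt{2\epsilon}\,R^2$ windows are not good-ended. Choosing $\epsilon$ small enough at the outset makes both $c(R-1-(c+1)/2)-2\sqrt{2\epsilon}R^2$ and $R^2-cR-2\sqrt{2\epsilon}R^2$ strictly positive, so a good-ended one-square window and a good-ended zero-square window both survive; selecting one of each produces the required $i_1,i_2,i_3,i_4$ and $c$.

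The main obstacle is the calibration in this last step. One-square windows are comparatively rare—of order $cR$ out of $R^2$—so $c$ must be a fixed fraction of $R=\sqrt{N_{j+1}}$ in order to beat the $\sqrt{\epsilon}\,R^2$ windows destroyed by a bad endpoint, while at the same time $c$ must stay well below $R$ so that zero-square windows remain a positive fraction. Verifying that an admissible $c$ exists is exactly where the freedom to shrink $\epsilon$ before the level $j$ is fixed gets used. The only routine care needed is in the boundary bookkeeping—checking that for $m$ neither too small nor equal to $R$ the length-$c$ window around $m^2$ really isolates a single square (via $2m-1>c$) and stays inside $\{1,\dots,R^2\}$—which perturbs the counts by lower-order terms only and leaves the strict positivity intact.
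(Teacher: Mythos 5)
Your proof is correct, but it takes a genuinely different route from the paper's. The paper finds the two pairs sequentially and adaptively: it first restricts attention to the initial segment of $\alpha N_{j+1}$ indices with $\alpha = 2\sqrt{2\epsilon}$, where consecutive spacers are only $O(\sqrt{\alpha N_{j+1}})$ apart, and argues that some single spacer there must be straddled by two good indices (otherwise about half of that segment would be bad, exceeding the $\sqrt{2\epsilon}\,N_{j+1}$ bound); this step \emph{defines} $c$, which is consequently small, $c < 4\sqrt{\alpha N_{j+1}}$. Only then does it run a second pigeonhole for that particular $c$, counting pairs $(i,i+c)$ lying inside a single inter-square gap $[(l-1)^2, l^2)$ and showing they cannot all meet the bad set. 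You instead fix $c=\lfloor\sqrt{N_{j+1}}/4\rfloor$ in advance and count the two window types in parallel: with this choice both the exactly-one-square windows (roughly $\tfrac{7}{32}N_{j+1}$ of them) and the zero-square windows (roughly $\tfrac34 N_{j+1}$) form positive fractions of all width-$c$ windows, while bad endpoints destroy at most $2\sqrt{2\epsilon}\,N_{j+1}$ windows, so one of each survives with the same $c$. Your version buys symmetry and a cleaner quantifier structure ($c$ is universal rather than dependent on where the good indices fall); the paper's buys a much smaller $c$, which is irrelevant here since the claim only asks for some positive integer $c$. Both arguments rest on the same two inputs --- the gap formula $k_b-k_a=(b-a)l_{j+1}+\#\{a<i\le b : i \text{ a square}>1\}$ and the bound on the number of bad members --- and both require $\epsilon$ small in absolute terms before $j$ is chosen, which the surrounding eigenfunction argument permits. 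Your boundary bookkeeping (the condition $m\ge (c+1)/2$ guaranteeing that a width-$c$ window around $m^2$ isolates a single square, and the exclusion of $m=\sqrt{N_{j+1}}$) is exactly the right check and affects the counts only at lower order.
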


    \begin{proof}
        Recall that there occur $(j+2)$-step spacers between $l^2$-th and $(l^2 + 1)$-st $\cC_{j+1}$-subblocks of $v$ for each $1 < l \leq \sqrt{N_{j+1}}$.

        Fix $\alpha = 2\sqrt{2\epsilon}$ and consider the first $\alpha N_{j+1}$ members of $\cI_v$. (For simplicity, we may assume that $\alpha N_{j+1}$ is an integer.) There we can find good members $k_{i_1} < k_{i_2}$ such that there is exactly one spacer between $k_{i_1}$-th and $k_{i_2}$-th $\cC_{j+1}$-subblocks of $v$. Otherwise the number of bad members of $\cI_v$ are greater than $\alpha /2 > \sqrt{2\epsilon}$, a contradiction. Hence we have $k_{i_2} - k_{i_1} = c \cdot l_{j+1} + 1$ for some integer $c$.
        As there are at most $\sqrt{\alpha N_{j+1}}$ spacers in the first $\alpha N_{j+1}$ members of $\cI_v$, we have $c < 4 \sqrt{\alpha N_{j+1}}$.

        %Now consider the last $(1-\alpha)N_{j+1}$ members of $\cI_v$.
        Suppose that there does not exist good members $k_{i_3}$ and $k_{i_4}$ such that $c \cdot l_{j+1} = k_{i_4} - k_{i_3}$.
        %then for each pair of members $(k, k+c)$ of $\cI_v$ between two square numbers $l^2$ and $(l+1)^2$, there is at least one bad member.
        Then at least one of $k_i$ and $k_{i+c}$ is bad if $(l-1)^2 \leq i, i+c < l^2$ for any $l \geq 2$.
        Therefore, the number of bad members of $\cI_v$ is bigger than
        %\frac{1}{N_{j+1}} \cdot \text{ all indices in $ 1 - \alpha$-portion } - \text{  elements which cannot be paired  } \\
        \[
            \begin{split}
                        & \sum_{l \leq \sqrt{N_{j+1}}} \Bigl\lfloor \frac{l^2 - (l-1)^2}{2c} \Bigr\rfloor \cdot c \\
                        & \geq \sum_{l \leq \sqrt{N_{j+1}}} \left( l - \frac{1}{2} - c \right) \\
                        & \geq \frac{N_{j+1}}{2} - c \sqrt{N_{j+1}} \\
                        & \geq \frac{1 - 8\sqrt{\alpha}}{2}  N_{j+1} > \sqrt{2\epsilon} N_{j+1}                       %& \geq \frac{1}{2} \cdot (  N_{j+1}(1-\alpha) - 2c \cdot ( \sqrt{ N_{j+1} } - ( \sqrt{\alpha N_{j+1}} ) ) \\
                        %& = \frac{1}{2} {N_{j+1}} \cdot \big( (1-\alpha) - (1-\sqrt{\alpha}) \cdot 8\sqrt{\alpha} \big) \\
                        %& = \frac{1}{2} {N_{j+1}} \big( 1 - 8\sqrt{\alpha} + 7\alpha \big) > \frac{1 - 8\sqrt{\alpha}}{2} > \sqrt{2\epsilon}
            \end{split}
        \]
        for all small $\epsilon > 0$, which is again a contradiction. Hence there are good members $k_{i_3}, k_{i_4} \in \cI_v$ such that $c \cdot l_{j+1} = k_{i_4} - k_{i_3}$.
    \end{proof}
%Therefore we may find four good indices $k_1 < k_2$ and $k_3 < k_4$ with $c \cdot l_{j+1} = k_2 - k_1 = k_4 - k_3 - 1$ for some $c \in \setN$. %To see this, there are two good approximable $k_1$ and $k_3$ in which there are only one spacer symbol $b$.

As $k_{i_1}$ and $k_{i_2}$ are good members of $\cI_v$, the set \[(\sigma^{k_{i_1}}([v]) \cap F) \cap \sigma^{-(k_{i_2} - k_{i_1})}(\sigma^{k_{i_2}}([v]) \cap F)\] is nonempty.
Take a point $y$ in the set above. Since $f$ is an eigenfunction with eigenvalue $\lambda$, we have $|f(y) - f \circ \sigma^{k_{i_2} - k_{i_1}}(y)| = |f(y) - f \circ \sigma^{c \cdot l_{j+1} + 1}(y)| = |f(y)| \cdot | 1 - \lambda^{c \cdot l_{j+1} + 1} | = | 1 - \lambda^{c \cdot l_{j+1} + 1} |$. Also, since $y \in (B \cap F) \cap \sigma^{-(k_{i_2} - k_{i_1})} (B \cap F)$ and $f_j$ is constant on $B$ we have
\[ |f(y) - f \circ \sigma^{k_{i_2} - k_{i_1}}(y)| \leq |f_j(y) - f_j \circ \sigma^{k_{i_2} - k_{i_1}}(y)| + 2\epsilon = 2 \epsilon. \]
Similarly by using $k_{i_3}$ and $k_{i_4}$ we have \[ | 1 - \lambda^{c \cdot l_{j+1}} | < 2 \epsilon, \]
and it follows that $| 1 - \lambda| = | \lambda^{c \cdot l_{j+1}} ( 1 - \lambda) | < 4 \epsilon$. This is true for every $\epsilon > 0$, hence we have $\lambda = 1$. Therefore $(X,\sigma)$ is metrically weakly mixing.

\vspace{0.2cm}
\subsection{Metric rigidity}
Let $X$ be any subshift constructed in this paper. We show that $X$ is metrically rigid with a rigidity sequence $\{l_n\}_{n \in \setN}$., i.e., $\lim_{n \to \infty} \mu(\sigma^{l_n} A \triangle A) = 0$ for each measurable set $A$.  To see this, first note that this holds for %$A \in \cA_j$, that is,
each set $A$ of the form $A = \sigma^k([u])$ with an unpermuted word $u \in \cC_j$ and $k \in \setN$.
For a measurable set $A$ and $\epsilon > 0$, there is $j \in \setN$ and finitely many $I_i \in \cA_j$ such that $\mu(A \triangle \bigcup I_i) < \epsilon$. Since each $I_i$ satisfies $\mu(\sigma^{l_n} I_i \triangle I_i) \to 0$, by an approximation we also have $\mu(\sigma^{l_n} A \triangle A) \to 0$ as $n \to \infty$.

 %let $C_0^j$ be the union of the base set of all the unpermuted words, i.e., $C_0^{(j)} = \{ \bigcup B_i^{(j)} : i \neq k^2 \text{ for } k = 2, \cdots, \sqrt{N_j} \}$. Then $\mu(C_0^j) > (1 - \frac{1}{\sqrt{N_j}}) \mu(B_0^j)$ and we have
%\[
    %\sigma^{l_n} (T^i C_0^j) \subset T^i C_0^j \text{ for } i = 0, 1, \cdots, l_n - 1,
%\]
%which shows that $X$ is rigid with a rigidity sequence $\{l_n\}_{n \in \setN}$.

\vspace{0.1cm}
\begin{ack*}
    The first named author was supported by 2012R1A1A2006874. The third named author was supported by NRF 2010-0020946.
\end{ack*}

%A similar argument in \S 5.2 applies to the system with a spacer, hence $X$ can be made to be both weakly mixing and rigid.
%If $f$ is constant on each level set of $\cC_j$, then consider the cylinder $A = [u_1^{(j)}]$.  Then there is a point $x \in A$ such that $x$ is of the form
%\[
    %x = ... u_1^{(j)} \cdots u_{N_j}^{(j)} u_1^{(j)} \cdots u_{N_j}^{(j)} b u_1^{(j)} \cdots u_{N_j}^{(j)} \cdots,
%\]
%so that $\sigma^{l_{j+1}}(x)$ and $\sigma^{2l_{j+1}+1}(x)$ are in $A$. Hence $f(x) = f(\sigma^{l_{j+1}}(x)) = f(\sigma^{2 l_{j+1} + 1}(x))$ and we have $\lambda = 1$.

%In general, given $\epsilon > 0$, we may use an approximation of an eigenfunction to a step function on a set of measure greater than $1 - \epsilon$, and a similar argument shows that $X$ is metrically weakly mixing.

%\begin{ack*}
%    The first named author was supported by 2012R1A1A2006874. The third named author was supported by NRF 2010-0020946.
%\end{ack*}

%\newpage
\bibliographystyle{amsplain}
\bibliography{_Bib_EntDim}

\end{document}